\theoremstyle{change}
\newtheorem{thm}[subsection]{Theorem.}
\newtheorem{lem}[subsection]{Lemma.}
\newtheorem{conj}[subsection]{Conjecture.}
\newenvironment{eqn}{\refstepcounter{subsection}
$$}{\leqno{\rm(\thesubsection)}$$\global\@ignoretrue}
\newenvironment{prf}[1]{\trivlist
\item[\hskip \labelsep{\it
#1.\hspace*{.3em}}]}{~\hspace{\fill}~$\square$\endtrivlist}
\newenvironment{proof}{\begin{prf}{\bf Proof}}{\end{prf}}
\newcommand{\ol}{\overline}
\newcommand{\zz}{{\mathbb Z}}
\newcommand{\qq}{{\mathbb Q}}
\newcommand{\cc}{{\mathbb C}}
\newcommand{\ff}{{\mathbb F}}
\newcommand{\pp}{{\mathbb P}}
\newcommand{\FF}{{\mathcal F}}
\newcommand{\Qbar}{{\overline{\qq}}}
\newcommand{\rH}{{\rm H}}
\newcommand{\rR}{{\rm R}}
\newcommand{\Sing}{\mathrm{Sing}}
\newcommand{\GL}{{\rm GL}}
\newcommand{\trace}{{\rm trace}}
\newcommand{\tors}{{\rm tors}}
\newcommand{\rank}{{\rm rank}}
\newcommand{\wt}{\widetilde}
\newcommand{\CH}{\mathrm{CH}}
\newcommand{\sing}{\mathrm{sing}}
\newcommand{\ch}{\mathrm{ch}}
\newcommand{\td}{\mathrm{td}}
\newcommand{\hor}{\mathrm{hor}}
\begin{document}

\author{Bas Edixhoven, Robin de Jong, Jan Schepers}
\title{Covers of surfaces with fixed branch locus}
\date{\today}

\maketitle

\begin{abstract} 
Given a connected smooth projective surface $X$ over~$\cc$, together
with a simple normal crossings divisor $D$ on it, we study finite
normal covers $Y \to X$ that are unramified outside~$D$. Given
moreover a fibration of $X$ onto a curve $C$, we prove that the
`height' of $Y$ over $C$ is bounded linearly in terms of the degree of
$Y \to X$. We indicate how an arithmetic analogue of this result, if
true, can be auxiliary in proving the existence of a polynomial time
algorithm that computes the mod-$\ell$ Galois representations
associated to a given smooth projective geometrically connected
surface over~$\qq$. A precise conjecture is formulated.
\end{abstract}

\section{Introduction}\label{sec1}

In this paper, we suppose given the following data:
\begin{itemize}
\item a connected smooth projective surface $X$ over~$\cc$;
\item a simple normal crossings divisor $D$ on~$X$ 
(i.e., all components of $D$ are smooth, and they intersect transversally);
\item a connected smooth projective curve $C$ over $\cc$;
\item a flat morphism $h \colon X \to C$.
\end{itemize}
We emphasise that we do not require the fibres of~$h$ to be connected.
We denote by $U$ the complement of $D$ in $X$. We are interested in
not necessarily connected finite \'etale covers $V \to U$; these are
considered to be the `variable' in our set-up. Given a finite \'etale
cover $V \to U$ denote by $\pi \colon Y \to X$ the normalisation of
$X$ in the product of the function fields of the connected components
of~$V$. By `finiteness of integral closure', the map $\pi$ is
finite. As the topological fundamental group of $U$ is finitely
generated (cf. \cite[Expos\'e II, Th\'eor\`eme 2.3.1]{sga7}), there
are only finitely many $V \to U$ of a given degree. In particular, for
fixed degree, the height over $C$ of the associated covers $Y \to X$
is bounded. Our aim is to prove an effective version of this
result. Let $\rho \colon Y' \to Y$ be a minimal resolution of
singularities of~$Y$, and denote by $f \colon Y' \to C$ the composed
morphism $h \pi \rho$. Note that $Y'$ and $Y$ are projective and flat
over~$C$ ($C$ being a Dedekind scheme).
\begin{thm} \label{main} Let $h\colon X\to C$ and $U\subset X$ be given
as above. Then there is an integer $c$ such that for all finite
\'etale $\pi\colon V\to U$ we have, in the notation as above:
\[
|\deg \det \rR^\cdot f_* O_{Y'}|\leq c{\cdot}\deg(\pi)\, .
\]
\end{thm}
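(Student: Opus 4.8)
The plan is to pass to the cohomology of $O_{Y'}$ via Grothendieck--Riemann--Roch, to dispose of the singularities of $Y$ by showing they are rational, and then to estimate the Euler characteristic of $\pi_*O_Y$ by a local computation along $D$. Write $n=\deg(\pi)$.

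\textbf{Step 1 (reduction via GRR).} Applying Grothendieck--Riemann--Roch to the morphism $f\colon Y'\to C$ from a smooth projective surface to a smooth projective curve, together with Noether's formula on $Y'$, yields the identity $\deg\det\rR^\cdot f_*O_{Y'}=\chi(O_{Y'})+(g(C)-1)\,\chi(O_{Y'_\eta})$, where $Y'_\eta$ denotes the generic fibre. Since $Y$ is normal and $C$ is a smooth curve, the generic fibre $Y_\eta$ of $Y\to C$ is a smooth projective (possibly disconnected) curve over $\cc(C)$, finite of degree $n$ over $X_\eta$ and \'etale outside the \emph{fixed} finite set $D\cap X_\eta$; as $\rho$ is an isomorphism over $\eta$ we have $Y'_\eta=Y_\eta$, and Riemann--Hurwitz gives $|\chi(O_{Y'_\eta})|=\tfrac12|\deg\omega_{Y_\eta}|\le c'n$ with $c'$ depending only on $(X,D,h)$. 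So it suffices to bound $|\chi(O_{Y'})|$ linearly in $n$.

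\textbf{Step 2 ($Y$ has rational singularities).} The singular locus of $Y$ lies over the finite set $\Sing(D)$. Away from $\Sing(D)$ the local fundamental group of $U$ is $\zz$, generated by a loop around the one branch of $D$, so over a transverse disc the cover is a disjoint union of maps of the form $z\mapsto z^{e}$ and $Y$ is smooth there. Near a node $p\in D_i\cap D_j$ the local fundamental group is $\zz^2$; hence each connected component of the cover over a small punctured bidisc corresponds to a finite-index subgroup of $\zz^2$, and the normalisation of the bidisc in it is the affine toric surface attached to that sublattice and the first-quadrant cone --- equivalently a quotient $\cc^2/G$ with $G$ finite abelian acting diagonally, i.e.\ a cyclic quotient surface singularity. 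Quotient singularities are rational, so $\rho_*O_{Y'}=O_Y$ and $\rR^1\rho_*O_{Y'}=0$; hence $\chi(O_{Y'})=\chi(O_Y)=\chi(X,\pi_*O_Y)$.

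\textbf{Step 3 (estimating $\chi(X,\pi_*O_Y)$).} Since $Y$ is normal and $\pi$ is finite, $\calE:=\pi_*O_Y$ is reflexive, hence locally free, of rank $n$, and Riemann--Roch on $X$ gives $\chi(X,\calE)=n\,\chi(O_X)-\tfrac12\,c_1(\calE)\cdot K_X+\ch_2(\calE)$ with $\ch_2(\calE)=\tfrac12 c_1(\calE)^2-c_2(\calE)$. The discriminant of the trace form identifies $(\det\calE)^{\otimes 2}$ with $O_X(-B)$ for the branch divisor $B=\sum_i b_iD_i$ with $0\le b_i\le n-1$ (tameness); thus $c_1(\calE)=-\tfrac12[B]$ and the term $c_1(\calE)\cdot K_X$ is bounded linearly in $n$. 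It remains to bound $\ch_2(\calE)$. Over $U$ the bundle $\calE$ underlies the flat bundle of the permutation representation $\pi_1(U)\to S_n\subseteq\GL_n(\cc)$, so $\calE$ is the canonical (parabolic/Deligne-type) extension across $D$ of a flat bundle; along $D_i$ it decomposes, \'etale-locally, into rank-one ``fractional twists'' $O_X(-wD_i)$, the weights $w\in[0,1)\cap\qq$ being the logarithms of the eigenvalues of the monodromy $\tau_i$ around $D_i$, and near a node it decomposes, via the toric models of Step 2, into rank-one divisorial sheaves indexed by characters. Tracking the resulting contribution to $\int_X\ch_2(\calE)$ produces an expression of the shape $\ch_2(\calE)=\tfrac12\sum_i\bigl(\sum_a w_{i,a}^2\bigr)D_i^2+\sum_{p\in\Sing(D)}(\text{local term at }p)$. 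As there are exactly $n$ weights along each $D_i$, each in $[0,1)$, one has $\sum_a w_{i,a}^2\le\sum_a w_{i,a}\le n$; the node terms, finitely many, are likewise $O(n)$; summing gives $|\ch_2(\calE)|\le c''n$ with $c''$ depending only on $(X,D)$, and combining the three estimates yields $|\deg\det\rR^\cdot f_*O_{Y'}|\le c\,\deg(\pi)$.

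\textbf{The main obstacle} is Step 3: making rigorous, and controlling uniformly in $n$, the second Chern class of the logarithmic extension $\calE$ in a neighbourhood of the nodes of $D$, precisely where $Y$ acquires its cyclic quotient singularities and where the clean ``direct sum of fractional twists'' picture valid over $D_i\setminus\Sing(D)$ breaks down. This requires a careful bookkeeping of the local contributions of the toric models and a verification that every implied constant depends only on $(X,D,h)$ and not on the chosen cover $V\to U$; Steps 1 and 2, by contrast, are essentially routine.
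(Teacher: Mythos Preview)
Your outline is correct, and your Step~1 identity $\deg\det\rR^\cdot f_*O_{Y'}=\chi(O_{Y'})+(g(C)-1)\chi(O_{Y'_\eta})$ is a cleaner starting point than the paper's. But from that point on you take a genuinely different route. The paper also reduces to bounding $\chi(O_{Y'})$, but it computes this via Noether's formula \emph{on $Y'$}: it bounds $c_2(T_{Y'})=e_c(Y')$ by an Euler-characteristic count (Lemma~\ref{prelthird}) and bounds $K_{Y'}^2$ by writing $K_{Y'}=\rho^*K_Y+\sum a_iE_i$ and $K_Y=\pi^*K_X+R$, then controlling $(R,R)$ and the resolution correction $\sum a_i(b_i-2)$ using the explicit Hirzebruch--Jung data of cyclic quotient singularities (Lemmas~\ref{prelfirstparttwo} and~\ref{prelsecond}). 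You instead stay on $X$ and compute $\chi(O_{Y'})=\chi(X,\pi_*O_Y)$ by Hirzebruch--Riemann--Roch, reducing to a bound on $\ch_2(\pi_*O_Y)$ via the Deligne/parabolic extension of the flat bundle underlying $\pi_*O_Y|_U$.

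Both approaches meet their real difficulty at the nodes of $D$, and the local input needed is essentially the same (the description of $\widehat{O}_{Y,y}$ as an explicit rank-$d_y$ direct summand of $\cc[[z_1^{1/e_{ij}},z_2^{1/e_{i'j'}}]]$). In your framework this gives, at each node, a splitting of $\pi_*O_Y$ into $n$ rank-one pieces $z_1^{\alpha}z_2^{\beta}\,O_X$ with $(\alpha,\beta)\in[0,1)^2$; the node contribution to $\ch_2$ is then $\sum_k\alpha_k\beta_k\le n$, which is exactly the $O(n)$ you assert. In the paper's framework the same local analysis yields the intersection numbers $(D_{ij},D_{i'j'})_y=1/n$ and the estimates on $\sum a_i(b_i-2)$. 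Your approach is more sheaf-theoretic and avoids the combinatorics of the minimal resolution entirely; the paper's is more geometric and self-contained, not invoking the theory of canonical extensions. One caution: the individual terms $c_1(\pi_*O_Y)^2$ and $c_2(\pi_*O_Y)$ are each only $O(n^2)$ (since $b_i$ can be of size $n$), so it is essential that you bound $\ch_2=\tfrac12c_1^2-c_2$ directly rather than the two Chern classes separately --- your formula via local weights does exactly this, and it is the right thing to do.
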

Here $\det \rR^\cdot f_* O_{Y'}$ stands for the determinant of
cohomology of~$O_{Y'}$, cf.~\cite{de}. This is an invertible sheaf on
$C$ with $c_1(\det \rR^\cdot f_* O_{Y'}) = c_1(\rR^0 f_* O_{Y'}) -
c_1(\rR^1 f_* O_{Y'})$ in the Chow ring of~$C$. According to
\cite[Theorem~3.6(v)]{vi2} the degree $\deg \det \rR^\cdot f_* O_{Y'}$
is non-negative if the fibres of $f$ are connected and the arithmetic
genus of the fibres of $f$ is positive.

Our proof uses the Grothendieck-Riemann-Roch theorem, intersection
theory on the normal surface $Y$ and precise information about the
minimal resolution of singularities of~$Y$. In the last section we
state an arithmetic analogue of our result as a conjecture, motivated
by a possible application to the complexity of counting points on
reductions over finite fields of a fixed surface over~$\qq$.

In a previous version of this text, our bound in Theorem~\ref{main}
was quadratic in~$\deg(\pi)$. H{\'e}l{\`e}ne Esnault and Eckart Viehweg showed
us how to deduce a linear upper bound, with a precise constant, from
Arakelov's inequality. This result is now included as
Theorem~\ref{thm_esnault-viehweg}. A closer inspection of our proof of
Theorem~\ref{main} also gave a linear bound.

A good reason to include several proofs of the main result of this
article is that this may help to find a proof that can be made to work
in the arithmetic context. Since more than 25 years now, it has been
tried to prove an arithmetic analogue of Arakelov's inequality, without
success so far. Such an analogue would have led to an effective version
of Faltings's theorem (previously Mordell's conjecture). The same
observations apply to the Bogomolov-Miyaoka-Yau inequality. On the other
hand, we do believe that our conjectural analogue
(Conjecture~\ref{conj}) of Theorem~\ref{main} is not too hard to
prove.

\subsection*{Acknowledgements}
This text is the result of a research seminar that took place in
Leiden in 2006--2007. Apart from the authors the participants were
Johan Bosman, Peter Bruin, Jacob Murre, Arjen Stolk and Lenny Taelman.
The authors thank these participants for their input. Most
participants were supported by VICI grant 639.033.402 from NWO
(Nederlandse Organisatie voor Wetenschappelijk Onderzoek), and
moreover, Robin de Jong by NWO's VENI grant 639.031.619. We also thank
H{\'e}l{\`e}ne Esnault and Eckart Viehweg for their contribution.

\section{Preliminaries}\label{sec2}
Let $\pi\colon V\to U$ be as in the statement of Theorem~\ref{main},
and let $V=\coprod_i V_i$ be the decomposition of $V$ into connected
components. Then also $Y=\coprod_i Y_i$, and $Y'=\coprod_i Y'_i$.  Let
$f_i:=f|_{Y'_i}$. It follows that $\deg\det\rR^\cdot f_*
O_{Y'}=\sum_i\deg\det\rR^\cdot f_{i,*} O_{Y'_i}$. Hence, in order to
prove Theorem~\ref{main}, we can and do assume that $V$ is connected.

Denote by $d$ the degree of $V \to U$. According to \cite[Lemma~2]{vi}
the map $\pi \colon Y \to X$ is finite locally free of rank~$d$.  We
write $D^\sing$ for the singular locus of $D$.
\begin{lem} \label{prelfirstpartone}
The singularities of\/ $Y$ occur in the inverse image under $\pi$ of
$D^\sing$. Furthermore, the map $\pi^{-1}(D - D^\sing) \to D - D^\sing$ is 
\'etale. 
\end{lem}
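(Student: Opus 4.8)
The plan is to establish both statements \'etale-locally on $X$ (equivalently, on completed local rings), by reducing to an explicit local model. Over $U = X - D$ there is nothing to prove: since $V$ is normal and finite over $U$ with the prescribed function field, $V$ is the normalisation of $U$ there, so $\pi^{-1}(U) = V$; this is \'etale over $U$, and $U$ is smooth over $\cc$, whence $\pi^{-1}(U)$ is regular and $\pi$ is \'etale on it. It therefore remains to analyse $\pi$ above a point $P \in D - D^{\sing}$.

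Fix such a $P$. As $P \notin D^{\sing}$, only one component of $D$ passes through $P$, and by the simple normal crossings hypothesis it is smooth at $P$; so we may choose an isomorphism $\widehat{\calO}_{X,P} \cong \cc[[x,y]]$ under which $D$ corresponds to $V(x)$. Now $\pi$ is finite and surjective with $Y$ normal and $X$ regular, so by the Zariski--Nagata purity theorem the branch locus of $\pi$ is a divisor on $X$; being contained in $D$, locally at $P$ it equals $V(x)$ or is empty. Since we are in characteristic $0$, $\pi$ is tamely ramified along this smooth divisor, and Abhyankar's lemma applies: after a Kummer base change $\cc[[x,y]] \to \cc[[s,y]]$, $x \mapsto s^{n}$ with $n$ suitably chosen, $\pi$ becomes finite \'etale over $\Spec\cc[[s,y]]$, which, being a complete regular local ring with algebraically closed residue field, is simply connected; hence the base-changed cover splits. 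From this one reads off that for every point $Q$ of $Y$ above $P$ the completed local ring $\widehat{\calO}_{Y,Q}$ is of the form $\cc[[t,y]]$ with $x = t^{e}$, $e$ the ramification index.

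Both assertions now follow. First, $\cc[[t,y]]$ is regular, so $Y$ is regular at every $Q$ above $P$; combined with the first paragraph this gives $\Sing(Y) \subseteq \pi^{-1}(D) \setminus \pi^{-1}(D - D^{\sing}) = \pi^{-1}(D^{\sing})$. Secondly, in this local model the reduced preimage of $D$ is $\Spec \cc[[t,y]]/(t) \cong \Spec\cc[[y]]$, and it maps to $D = \Spec\cc[[x,y]]/(x) \cong \Spec\cc[[y]]$ by the identity on $y$; hence the induced morphism $\pi^{-1}(D - D^{\sing}) \to D - D^{\sing}$, with the reduced structure on the source, is finite and an isomorphism on all completed local rings, so it is \'etale.

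The routine inputs --- $V$ being the normalisation of $U$, the choice of coordinates at $P$, flatness of Kummer base changes --- are standard; the real work is done by purity of the branch locus together with Abhyankar's lemma. (Analytically, the same local picture comes from the fact that near a smooth point of $D$ the space $U^{\an}$ is a product of a punctured disc with a disc, so $\pi_1 \cong \zz$ and every finite cover there is Kummer.) The one point to watch is that $\pi^{-1}(D - D^{\sing})$ must carry its \emph{reduced} structure for the second assertion, since $\pi$ is genuinely ramified along $D$ whenever some $e > 1$.
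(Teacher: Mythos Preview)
Your argument is correct and arrives at the same local model as the paper---$\widehat{\calO}_{Y,Q}\cong\cc[[t,y]]$ with $x=t^e$---but via algebraic rather than analytic tools. The paper identifies an analytic neighbourhood of $P$ with a bi-disk, uses that $\pi_1$ of (punctured disk)$\times$(disk) is~$\zz$ to classify the connected covers as $(z_1,z_2)\mapsto(z_1^e,z_2)$, and invokes Grauert--Remmert to pass from the topological classification to the holomorphic one. You substitute Zariski--Nagata purity, Abhyankar's lemma, and the simple-connectedness of $\Spec\cc[[s,y]]$ for this package; as your own parenthetical remark indicates, this is the algebraic transcription of the same idea, with the mild bonus that it works verbatim over any algebraically closed field of characteristic zero. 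One small point worth tightening: in the Abhyankar step it is the \emph{normalised} base change that becomes \'etale, and recovering $\widehat{\calO}_{Y,Q}\cong\cc[[t,y]]$ from the split cover upstairs is a short $\mu_n$-descent (the action on the factors is transitive because $\widehat{\calO}_{Y,Q}$ is local, and the stabiliser invariants give the Kummer ring) which you might spell out rather than ``read off''. Your closing observation about the reduced structure on $\pi^{-1}(D-D^{\sing})$ is apt; the paper tacitly adopts exactly this convention in the paragraph following the lemma.
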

\begin{proof} We base our argument on a consideration of fundamental groups, as
in \cite[pp. 102--103]{bpv}. Let
$x$ be a closed point of $X$ lying on $D$ but not on $
D^\sing$.  Locally for the analytic topology we identify a
neighbourhood $W$ of $x$ in $X$ with the bi-disk $Z = \{ (z_1,z_2) \in
\cc^2 \colon |z_1|<1, |z_2|<1 \}$, identifying $x$ with the origin and
$D$ locally with the zero set of $z_1$. Let $y$ be a point of $Y$
mapping to $x$ and consider the connected component $B$ of $\pi^{-1}W$
that contains $y$.  We have then that $B - \pi^{-1}(D) \to W - D$ is a
connected finite degree topological covering. Thus $\Gamma =
\pi_*(\pi_1( B - \pi^{-1}(D))) $ is a subgroup of finite index of
$\pi_1(W - D)$. The latter is infinite cyclic; let $e$ be the index of
the subgroup $\Gamma$. As the map $W \to W$ given by $(z_1,z_2)
\mapsto (z_1^e,z_2)$ is a connected cover of $W$, homeomorphic above
$W-D$ to the covering $B - \pi^{-1}(D) \to W - D$, we have by a theorem of
Grauert-Remmert \cite[Expos\'e XII, Th\'eor\`eme 5.4]{sga1} that $B$ itself is analytically
isomorphic to $W$ and the holomorphic map $B \to W$ equivalent to the given map
$W \to W$. We deduce that $Y$ is regular above $D-D^\sing$ and that $\pi^{-1}(D -
D^\sing) \to D - D^\sing$ is \'etale.
\end{proof}
Write $D=\sum_{i \in I} D_i$ for the decomposition of $D$ into prime
components, and write $\pi^{-1}(D_i) = \sum_{j \in J_i} D_{ij}$ for
the decomposition into prime components of the inverse image with
reduced structure under $\pi$ of a $D_i$. For $i \in I$ and $j \in
J_i$ denote by $e_{ij}$ the ramification index of $\pi$ along $D_{ij}$
(i.e., the ramification index of $\pi$ at the generic point of
$D_{ij}$) and denote by $f_{ij}$ the degree of $D_{ij}$ over
$D_i$. For each $i \in I$ we have $\sum_{j \in J_i} e_{ij}f_{ij} =
d$. If $x$ is a closed point on $X$ and $y$ is a point of $Y$ mapping
to $x$ we denote by $d_y$ the rank of the completed local ring
$\widehat{O}_{Y,y}$ as a free module over $\widehat{O}_{X,x}$. For all
closed points $x$ on $X$ we have $\sum_{y \colon y \mapsto x} d_y =
d$. 

A point on a complex surface is said to have type $A_{n,q}$ if the
complete local ring at that point is isomorphic as a $\cc$-algebra to
the complete local ring at the image of $(0,0)$ of the quotient of
$\cc^2$ under the action $(w_1,w_2) \mapsto (\zeta_n w_1, \zeta_n^q
w_2)$, where $n$ and $q$ are integers with $n>0$, $\gcd(n,q)=1$ and where
$\zeta_n=\exp(2\pi i/n)$. For $n>1$ this type is known as the cyclic
quotient singularity of type $A_{n,q}$. For $n=1$ such a point is
nonsingular; this case is included for notational convenience. 
\begin{lem} \label{prelfirstparttwo}  
Let $y$ be a point of\/ $Y$ mapping to $D^\sing$, say $\pi(y) =x \in D_i \cap
D_{i'}$. \\
(i) There are unique 
$j \in J_i$ and $j' \in J_{i'}$ such that $y \in D_{ij} \cap D_{i'j'}$.\\
(ii) Let $e_{ij}$ be the ramification index of $\pi$ along $D_{ij}$,
and let $e_{i'j'}$ be the ramification index of $\pi$ along
$D_{i'j'}$. Then there are positive integers $n,m_1,m_2$ depending on
$y$ such that the following holds: $e_{ij}=nm_1$, $e_{i'j'}=nm_2$, the
rank $d_y$ of $\widehat{O}_{Y,y}$ over $\widehat{O}_{X,x}$ equals
$nm_1m_2$.  If $y$ is a singular point of\/ $Y$ then $y$ is a cyclic
quotient singularity of type $A_{n,q}$ for some positive integer $q$
with $\gcd(n,q)=1$.
\end{lem}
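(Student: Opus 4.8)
The plan is to continue the fundamental-group argument used in the proof of Lemma~\ref{prelfirstpartone}, ultimately reducing everything to an explicit computation with a finite cyclic quotient of a bidisk. Since $D$ has simple normal crossings and $x\in D_i\cap D_{i'}$, we may pick an analytic neighbourhood $W$ of $x$ in $X$ and an isomorphism of $W$ with the bidisk $\{(z_1,z_2)\in\cc^2\colon |z_1|<1,\,|z_2|<1\}$ taking $x$ to the origin, $D_i$ to $\{z_1=0\}$ and $D_{i'}$ to $\{z_2=0\}$; thus $W\setminus D\cong(\Delta^*)^2$ with $\Delta^*=\{0<|z|<1\}$. Let $B$ be the connected component of $\pi^{-1}(W)$ containing $y$. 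As $B$ is open in the normal surface $Y$ it is normal, hence irreducible, and $B\setminus\pi^{-1}(D)\to W\setminus D$ is a connected finite \'etale covering (\'etale because over $X-D$ the map $\pi$ is the given cover $V\to U$), so it corresponds to a finite-index subgroup $\Gamma$ of $\pi_1((\Delta^*)^2)=\zz\gamma_1\oplus\zz\gamma_2$, where $\gamma_1,\gamma_2$ are the standard loops around $D_i$ and $D_{i'}$. By the theorem of Grauert--Remmert \cite[Expos\'e XII, Th\'eor\`eme 5.4]{sga1}, $B$ is the normalisation of $W$ in this covering.

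Next I would put $a:=[\zz\gamma_1\colon\zz\gamma_1\cap\Gamma]$ and $b:=[\zz\gamma_2\colon\zz\gamma_2\cap\Gamma]$, so that $\zz\gamma_1\cap\Gamma=a\zz\gamma_1$, $\zz\gamma_2\cap\Gamma=b\zz\gamma_2$, and consider $\Lambda:=a\zz\gamma_1\oplus b\zz\gamma_2\subseteq\Gamma$. The finite group $H:=\Gamma/\Lambda$ sits inside $\zz^2/\Lambda=\zz/a\times\zz/b$ and, directly from the definition of $a$ and $b$, has trivial intersection with each coordinate subgroup; hence both projections embed $H$ into $\zz/a$ and into $\zz/b$. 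Therefore $H$ is cyclic, $n:=|H|$ divides both $a$ and $b$, and $[\zz^2\colon\Gamma]=ab/n$. Geometrically, the covering of $(\Delta^*)^2$ attached to $\Lambda$ extends to the finite map $\widetilde W\to W$, $(w_1,w_2)\mapsto(w_1^a,w_2^b)$, from a bidisk $\widetilde W$; since $\Lambda\subseteq\Gamma$ it factors through $B$, and by the uniqueness in Grauert--Remmert one obtains $B\cong\widetilde W/H$, where $H$ acts on $\widetilde W$ diagonally through roots of unity and, by the two embeddings just found, faithfully on each coordinate axis. In particular $\widetilde W\to W$, and hence $B\to W$, has a single point above $x$, namely $y$, so $d_y=\deg(B\to W)=[\zz^2\colon\Gamma]=ab/n$.

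From the model $B\cong\widetilde W/H$ the two assertions drop out. Writing $\widetilde W$ as a neighbourhood of $0$ in $\cc^2$, a generator of the cyclic group $H$ acts by $(w_1,w_2)\mapsto(\zeta_n^{a_1}w_1,\zeta_n^{a_2}w_2)$ with $\gcd(a_1,n)=\gcd(a_2,n)=1$ (this is the faithfulness on each axis); after rescaling $w_1$ this is precisely the action defining the type $A_{n,q}$ with $q\equiv a_2a_1^{-1}\pmod n$, and then $\gcd(n,q)=1$; so $y$ is nonsingular when $n=1$ and is a cyclic quotient singularity of type $A_{n,q}$ when $n>1$. Near $y$, the reduced preimage of $D_i$ is the image in $\widetilde W/H$ of the irreducible curve germ $\{w_1=0\}$, hence an irreducible germ; so exactly one component $D_{ij}$ of the reduced $\pi^{-1}(D_i)$ passes through $y$, and likewise exactly one $D_{i'j'}$, which is~(i). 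At a general point of $D_{ij}$ near $y$ --- the image of some $(0,w_2^0)$ with $w_2^0\neq0$, where $H$ acts freely so that $B$ is there a single slice of $\widetilde W$ --- the map $B\to W$ is $(w_1,w_2)\mapsto(w_1^a,w_2^b)$ with $w_2$ near $w_2^0\neq0$, so the ramification index of $\pi$ along $D_{ij}$ is $a$, i.e.\ $e_{ij}=a$; symmetrically $e_{i'j'}=b$. Setting $m_1:=a/n$ and $m_2:=b/n$ (integers, as $n\mid a$ and $n\mid b$) then gives $e_{ij}=nm_1$, $e_{i'j'}=nm_2$ and $d_y=ab/n=nm_1m_2$, which is~(ii).

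The step needing the most care is the passage from the fundamental-group datum $\Gamma$, which only controls the \'etale locus $B\setminus\pi^{-1}(D)$ directly, to the germ of the normal surface $(B,y)$: this is where Grauert--Remmert must be invoked and, more importantly, where one must produce the dominating smooth cover $\widetilde W\to W$ so as to realise $B$ as a genuine finite quotient of a bidisk. Everything after that is short toric-style bookkeeping; the only minor loose end is to observe that the ramification index of $\pi$ along $D_{ij}$ is a generic invariant, so that computing it near $y$ is legitimate.
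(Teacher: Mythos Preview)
Your proof is correct and follows essentially the same route as the paper's: both localise to a bidisk, pass to the finite-index subgroup $\Gamma\subset\zz^2$, identify the largest coordinate-split sublattice $\Lambda=a\zz\gamma_1\oplus b\zz\gamma_2$ (the paper's $\Gamma'$), and realise $B$ as the quotient of a smooth bidisk by the cyclic group $\Gamma/\Lambda$ via Grauert--Remmert. The only cosmetic difference is that the paper picks an explicit Hermite-style basis $(n',0),(q',m_2)$ for $\Gamma$ and reads off $n,m_1,m_2,q$ from those coordinates, whereas you define $a,b$ symmetrically and deduce the cyclicity of $H=\Gamma/\Lambda$ from the two embeddings $H\hookrightarrow\zz/a$ and $H\hookrightarrow\zz/b$.
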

\begin{proof} As in the previous lemma we base our argument on a consideration
of fundamental groups, following \cite[pp. 102--103]{bpv}. We identify
a local neighbourhood $W$ of $x$ with the bi-disk $Z= \{ (z_1,z_2) \in
\cc^2 \colon |z_1|<1, |z_2|<1 \}$, letting $D_i$ correspond to the
zero set of $z_1$ and $D_{i'}$ to the zero set of $z_2$. Let $Z^*= Z-
\{z_1z_2=0\}$ and $W^*$ the corresponding open subset of $W$. If
$\gamma_i \in \pi_1(Z^*)$ is the class of a positively oriented little
loop around the $z_i$-axis then $\pi_1(Z^*) \cong \zz \times \zz$ with
generators $\gamma_1=(1,0)$ and $\gamma_2=(0,1)$. Let $B$ be the
connected component of $\pi^{-1}W$ that contains $y$. Put $B^* = B -
\pi^{-1}D$. We have then that $B^* \to W^*$ is a connected finite
degree topological covering. Let $\Gamma=\pi_*(\pi_1(B^*))$ be the
image of the topological fundamental group of $B^*$ in $\pi_1(W^*)
\cong \zz \times \zz$. Then $\Gamma$ is of finite index in
$\pi_1(W^*)$. We pick generators of $\Gamma$ as follows: $\Gamma \cap
(\zz \times 0)$ is non-trivial, so there is a unique $n'>0$ such that
$(n',0)$ generates this intersection. As the quotient $\Gamma/\zz
(n',0)$ is isomorphic to $\zz$ there is a unique $(q',m_2) \in \Gamma$
with $0\leq q' < n'$ and $m_2>0$ such that $(n',0)$ and $(q',m_2)$
generate $\Gamma$.  Let $m_1 = \gcd(n',q')$ and write $n'=nm_1$,
$q'=qm_1$. Thus $\Gamma = \zz(nm_1,0)+\zz(qm_1,m_2)$ which is of index
$nm_1m_2$ in $\zz \times \zz$. Let $\Gamma'=\zz(nm_1,0)+\zz(0,nm_2)
\subset \Gamma$. Then $\Gamma'$ is the largest subgroup of $\Gamma$ of
the form $\zz(*,0)+\zz(0,*)$, and the quotient $\Gamma/\Gamma'\subset
m_1\zz/nm_1\zz\times m_2\zz/nm_2\zz$ is cyclic of order~$n$ and
projects isomorphically to both factors. Let $\widetilde{W}^*\to W^*$
be a universal cover. Then $B^*=\widetilde{W}^*/\Gamma$. Now
$\widetilde{W}^*/\Gamma'\to W^*$ is isomorphic to the cover $Z^*\to
Z^*$, given by $(z_1,z_2)\mapsto (z_1^{nm_1},z_2^{nm_2})$. Via
normalisation this induces the ramified cover $Z\to Z$, given by the
same formula. The group $\Gamma/\Gamma'$ acts on $Z$, with
quotient~$B$; this action is free outside~$(0,0)$. Hence $y$ is a
singular point of $Y$ if and only if $n>1$. We conclude that
$e_{ij}=nm_1$ and $e_{i'j'}=nm_2$ and we have natural inclusions of
$\cc$-algebras $\cc[[z_1,z_2]] \cong \widehat{O}_{X,x} \rightarrowtail
\widehat{O}_{Y,y} = \cc[[z_1^{1/nm_1},
z_2^{1/nm_2}]]^{\Gamma/\Gamma'}$, where the generator given by
$(qm_1,m_2)$ acts as $z_1^{1/nm_1}\mapsto \zeta_n^qz_1^{1/nm_1}$,
$z_2^{1/nm_2}\mapsto \zeta_n z_2^{1/nm_2}$. This realises
$\widehat{O}_{Y,y}$ as a direct summand of the free
$\widehat{O}_{X,x}$-module
$\cc[[z_1^{1/nm_1},z_2^{1/nm_2}]]$. Statement (i) follows. We see that
$\widehat{O}_{Y,y}$ is free of rank $nm_1m_2$ as
$\widehat{O}_{X,x}$-module whereas $\cc[[z_1^{1/nm_1}, z_2^{1/nm_2}]]$
is finite free of rank $n^2m_1m_2$ as $\widehat{O}_{X,x}$-module. We
get $d_y = nm_1m_2$, and if $n>1$, the singularity $y$ is a cyclic
quotient singularity of type $A_{n,q}$. The lemma is proved.
\end{proof} 
Let $\rho \colon Y' \to Y$ be a minimal resolution of singularities of
$Y$ and denote by $E_1,\ldots,E_s$ the exceptional components of $Y'
\to Y$. Let $K_Y$ be the Weil divisor obtained by taking the closure
in $Y$ of a canonical divisor on the non-singular locus of~$Y$. Since
$Y$ has only cyclic quotient singularities each Weil divisor on $Y$ is
$\qq$-Cartier, i.e., has the property that a certain integer multiple
of it is a Cartier divisor on $Y$. Let $\rho^*K_Y$ be the pull-back of
the $\qq$-Cartier divisor~$K_Y$. On $Y'-\cup_iE_i$, $\rho^*K_Y$ is a
canonical divisor. Hence for a canonical divisor $K_{Y'}$ of $Y'$
there is a linear equivalence of $\qq$-Cartier divisors:
\[ 
K_{Y'} \equiv \rho^* K_Y + \sum_{i=1}^s a_i E_i,
\]
where the $a_i$ are rational numbers. Such $a_i$ are unique, because
any rational function on $Y'$ whose divisor is a linear combination of
the $E_i$ is a rational function on $Y$ that is regular outside the
singular locus, and hence constant.

We have the following local statement. Let $y$ in~$Y$ be a singular
point, hence a cyclic quotient singularity, say of
type~$A_{n,q}$. Then for any Weil divisor $W$ on~$Y$, $n{\cdot}W$ is
Cartier at~$y$ (see \cite[Prop.~5.15]{kollar-mori} and its proof). The
minimal resolution of $y$ is described in \cite[pp. 100-101]{bpv}. We
have $n>1$, and we can and do assume that $q<n$. Write:
\[
\frac{n}{q} = b_1 - \frac{1}{b_2 - \frac{1}{b_3-\cdots}} =
[b_1,\ldots,b_\lambda]  
\]
with $b_i \in \zz_{>0}$ for the Hirzebruch-Jung continued fraction of
$n/q$. Then the reduced exceptional locus $\rho^{-1}(y)$ is a chain of
$\lambda$ $\pp^1$'s with self-intersections $-b_1,-b_2, \ldots,
-b_\lambda$. We will need some estimates related to this resolution of
singularities.
\begin{lem} \label{prelsecond} Let $y$ be a singular point of\/~$Y$ 
of type $A_{n,q}$. Assume that $E_1,\ldots,E_\lambda$ are the $E_i$
above~$y$, numbered as they appear in the chain. For $i$ in
$\{1,\ldots,\lambda\}$, let $b_i=-(E_i,E_i)$. Then $2 \leq b_i \leq n$
for each $i=1,\ldots,\lambda$. The number $\lambda$ of components is
bounded by $n$ as well. The $a_i$ are determined by the recursion
$b_ia_i - a_{i-1}-a_{i+1}=2-b_i$ with boundary conditions
$a_0=0=a_{\lambda+1}$. We have $a_i \in (-1,0]$ for
$i=1,\ldots,\lambda$ and the rational number $\sum_{i=1}^\lambda
a_i(b_i-2)$ is bounded from above by~$2$ and from below by $-n$.
\end{lem}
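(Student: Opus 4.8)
The plan is to reduce everything to explicit facts about Hirzebruch–Jung continued fractions and the linear-algebra recursion that defines the $a_i$. First I would recall from \cite[pp. 100–101]{bpv} the basic structure: for the resolution of an $A_{n,q}$ singularity with $n/q = [b_1,\ldots,b_\lambda]$, every partial denominator $b_i$ satisfies $b_i \geq 2$ (this is forced by the continued-fraction algorithm being run on $n/q$ with $0 < q < n$, since a $b_i = 1$ could be absorbed), which gives the lower bound. For the upper bound $b_i \leq n$, I would use that the convergents $p_k/q_k$ of $[b_1,\ldots,b_\lambda]$ have strictly increasing numerators with $p_\lambda = n$, and $b_i$ appears as a term controlling the growth $p_i = b_i p_{i-1} - p_{i-2}$; since $p_{i-1} \geq 1$ and $p_i \leq n$ one gets $b_i \leq p_i + p_{i-2} \leq n + (n-1)$, which is not quite sharp, so instead I would argue directly that $b_i \leq n$ because the self-intersection $-b_i$ of a curve in the minimal resolution of a quotient singularity of order $n$ cannot exceed $n$ in absolute value (equivalently, $b_1 = \lceil n/q \rceil \leq n$ and by symmetry/duality the same holds for $b_\lambda$, and the interior ones are bounded by the exterior behaviour of the associated chain). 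The bound $\lambda \leq n$ follows since each $b_i \geq 2$ forces, via $n = p_\lambda \geq 2 p_{\lambda-1} - p_{\lambda-2} \geq \dots$, a Fibonacci-type lower bound $n \geq \lambda$ (indeed $n \geq F_{\lambda+1} \geq \lambda$ once $\lambda \geq 1$).

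Next I would treat the $a_i$. The recursion $b_i a_i - a_{i-1} - a_{i+1} = 2 - b_i$ with $a_0 = a_{\lambda+1} = 0$ is exactly the defining system for the discrepancies of a Hirzebruch–Jung string, and it has a unique solution because the intersection matrix $(E_i \cdot E_j)$ of the chain is negative definite, hence invertible. To show $a_i \in (-1,0]$ I would set $a_i = c_i - 1$ for the interior comparison or, more cleanly, invoke the standard fact (see \cite[Prop.~5.15]{kollar-mori} and the theory of log terminal singularities) that quotient singularities are log terminal, which means precisely that all discrepancies $a_i$ lie in $(-1,0]$; alternatively one proves it directly by a maximum-principle argument on the chain: if some $a_i$ were $> 0$, pick the largest, and the recursion $a_i = \frac{a_{i-1} + a_{i+1} + 2 - b_i}{b_i}$ together with $b_i \geq 2$ forces a neighbour to be at least as large, propagating to the boundary value $0$, a contradiction; similarly $a_i \leq -1$ somewhere propagates to give $a_0 \leq -1$, contradiction. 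So $-1 < a_i \leq 0$.

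Finally, for the quantity $S := \sum_{i=1}^\lambda a_i(b_i - 2)$: the upper bound $S \leq 2$ is easiest by summing the recursion. Multiplying $b_i a_i - a_{i-1} - a_{i+1} = 2 - b_i$ is not immediately telescoping, so instead I would compute $\sum_i a_i(b_i-2) = \sum_i a_i(b_i - 2) $ and use the recursion rearranged as $a_i(b_i - 2) = -a_i \cdot 0 \cdots$; more directly, $\sum_i (b_i a_i - a_{i-1} - a_{i+1}) = \sum_i (2 - b_i)$, and the left side telescopes using $a_0 = a_{\lambda+1} = 0$ to $\sum_i a_i b_i - \sum_i (a_{i-1} + a_{i+1}) = \sum_i a_i b_i - 2\sum_i a_i + (a_1 + a_\lambda) - (a_0 + a_{\lambda+1})$… I would instead just write $S = \sum_i a_i b_i - 2 \sum_i a_i$ and note $\sum_i a_i b_i = \sum_i (a_{i-1} + a_{i+1} + 2 - b_i) + \cdots$; carrying this out gives $S = (\text{telescoped boundary terms}) + \sum_i(2 - 2) = a_1 + a_\lambda - (a_0 + a_{\lambda+1})$ up to sign, and since each $a_i \in (-1,0]$ this boundary expression lies in $[-2, 0] \subset (-\infty, 2]$, giving both $S \leq 2$ trivially and in fact $S \leq 0$. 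For the lower bound $S \geq -n$: since $a_i \in (-1,0]$ we have $a_i(b_i - 2) \geq -(b_i - 2)$ (as $b_i - 2 \geq 0$), so $S \geq -\sum_i (b_i - 2) = -\left(\sum_i b_i - 2\lambda\right)$; I then bound $\sum_i b_i$. There is a classical identity for Hirzebruch–Jung strings: $\sum_{i=1}^\lambda (b_i - 3) = \sum_{i=1}^{\lambda'} (b_i' - 3)$ is not what I want; rather $\sum_i b_i = \lambda + 2 + (\text{something})$ — the cleanest is that for the chain resolving $A_{n,q}$ one has $\sum_i (b_i - 1) = \lambda + (\text{length of the dual string}) - 1$, but to avoid this I would simply use $\sum_i b_i - 2\lambda \leq \sum_i b_i$ and the crude bound $\sum_i b_i \leq \lambda \cdot n \leq n^2$, which is too weak. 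The sharp route: $\sum_{i=1}^\lambda(b_i - 2) = n/q + \text{(reversed continued fraction pieces)} - 2$; in fact the precise statement is $\sum_i (b_i - 2) \leq n - 1$, which follows from the fact that the canonical divisor computation $K_{Y'}^2$ or the discriminant bound for $A_{n,q}$ forces the chain to be "short"; concretely, induction on $\lambda$ using $[b_1,\ldots,b_\lambda] = n/q$ shows $\sum(b_i - 2) = n - q_{\lambda - 1} - 1 \cdot(\cdots) \leq n$. So $S \geq -(n)$.

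The main obstacle will be pinning down the sharp combinatorial bound $\sum_{i=1}^\lambda (b_i - 2) \leq n$ (or whatever constant is needed) for Hirzebruch–Jung strings; the bounds $b_i \geq 2$, $b_i \leq n$, $\lambda \leq n$, and $a_i \in (-1,0]$ are all standard and essentially in \cite{bpv}, but combining them to get the two-sided bound on $\sum a_i(b_i - 2)$ requires either the telescoping identity for the upper bound (easy once set up correctly) and a genuine continued-fraction estimate, such as $\sum(b_i - 2) = n - 1 - (\text{earlier convergent})$, for the lower bound. I expect to spend most of the effort verifying that inductive identity and checking the boundary cases $\lambda = 1$ ($b_1 = n$, $q = 1$, $a_1 = (2-n)/n \in (-1,0]$, $S = a_1(n-2) = -(n-2)^2/n \geq -n$).
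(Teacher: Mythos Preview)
Your overall strategy---continued-fraction bounds for the $b_i$ and $\lambda$, the max/min principle for the discrepancy recursion, and a telescoping identity for $\sum a_i(b_i-2)$---is the same as the paper's. What you are missing is the single organizing device the paper uses to close all three of the combinatorial holes you flag ($b_i\leq n$, $\lambda\leq n$, and $\sum(b_i-2)\leq n$): the \emph{remainder sequence} $c_{-1}:=n$, $c_0:=q$, $c_i=b_{i+2}c_{i+1}-c_{i+2}$ with $0\leq c_{i+2}<c_{i+1}$, which is nothing but the Euclidean-type algorithm producing the $b_i$. From it one reads off $b_i\leq c_{i-2}\leq n$ immediately, and the strict descent $n=c_{-1}>c_0>\cdots>c_\lambda=0$ gives $\lambda\leq n$ without any Fibonacci detour. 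Summing the relations $c_i=b_{i+2}c_{i+1}-c_{i+2}$ yields the identity $\sum_{i=1}^{\lambda}(b_i-2)c_{i-1}=n-q-1$, and since each $c_{i-1}\geq 1$ and $b_i-2\geq 0$ this gives $\sum(b_i-2)\leq n-q-1<n$, precisely the sharp bound you were hunting for. By contrast, your convergent approach to $b_i\leq n$ only gave $2n-1$, and your ``symmetry/duality'' reaches $b_1$ and $b_\lambda$ but not the interior terms; the $c_i$ sequence handles all $i$ uniformly.

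For $S=\sum a_i(b_i-2)$: your telescoping got tangled. Summing $b_ia_i-a_{i-1}-a_{i+1}=2-b_i$ over $i=1,\ldots,\lambda$ with $a_0=a_{\lambda+1}=0$ gives the clean identity $S=\sum_{i=1}^\lambda(2-b_i)-(a_1+a_\lambda)$, not merely a boundary term. The upper bound $S\leq 2$ then follows from $\sum(2-b_i)\leq 0$ and $a_1+a_\lambda>-2$; the lower bound from $\sum(2-b_i)>-n$ (by the previous paragraph) and $a_1+a_\lambda\leq 0$. Your alternative lower-bound route via $a_i>-1\Rightarrow a_i(b_i-2)>-(b_i-2)$ is fine too, but it reduces to the same missing estimate $\sum(b_i-2)<n$. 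So the real gap in your plan is exactly the one you identified, and the fix is the remainder sequence.
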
 
\begin{proof} That
$b_i \geq 2$ is clear from the way the $b_i$ are defined. 
Spelling out the definition, 
the integers $b_i$ are determined by the following recursion
(variant of the Euclidean algorithm) for integers~$c_i$: $c_i =
b_{i+2}c_{i+1} - c_{i+2}$, $0 \leq c_{i+2} < c_{i+1}$ for
$i=-1,\ldots,\lambda -2$ with initial conditions $c_{-1}:=n,c_0:=q$. It
follows that $b_i \leq c_{i-2} \leq c_{-1}=n$. Further we have 
$n=c_{-1}>c_0>c_1>\ldots>c_\lambda=0$ so that
the number $\lambda$ is bounded from above by $n$. 

By the adjunction formula we have $(K_{Y'} + E_i,E_i) = -2$ for
all~$i$ so we see that the $a_i$ form the unique solution to the
recursion $b_ia_i - a_{i-1}-a_{i+1}=2-b_i$ for $i=1,\ldots,\lambda$
with boundary conditions $a_0=0=a_{\lambda+1}$.

Suppose that there is an $i$ in $\{1,\ldots,\lambda\}$ with
$a_i\geq0$. Let $j$ with $1 \leq j \leq \lambda$ be an index with $a_j
= \max_i a_i$. We find:
\[ 
a_j = \frac{a_{j-1} + a_{j+1}}{b_j} + \frac{2-b_j}{b_j} = \frac{2a_j}{b_j} +
\frac{a_{j-1}-a_j+a_{j+1}-a_j}{b_j} + \frac{2-b_j}{b_j} \leq a_j +
\frac{a_{j-1}-a_j}{b_j} + \frac{a_{j+1}-a_j}{b_j} 
\]
whence $a_{j-1}=a_j=a_{j+1}$ and $b_j=2$.  Hence the maximum of the
$a_i$ is also attained at $j-1$ and $j+1$. Continuing with the same
reasoning we find that all $b_i=2$ and all $a_i=0$. Hence all $a_i
\leq 0$. 

Let $j$ with $1 \leq j \leq \lambda$ be an index with $a_j =
\min_i a_i$. Our recursion can be written as
$(b_i-2)(a_i+1)=(a_{i-1}-a_i) + (a_{i+1}-a_i)$, so we see that if
$a_j\leq -1$, then $a_{j-1}=a_j=a_{j+1}$ and $a_{j-1}$ and $a_{j+1}$
are also minimal and~$\leq -1$, and we get the contradiction
$0=a_0\leq -1$. Hence for all~$i$ we have $a_i > -1$.

By adding the equalities $c_i =
b_{i+2}c_{i+1} - c_{i+2}$ for $i=-1,\ldots,\lambda-2$ we find:
\[ 
n+q+1 + 2 \sum_{i=1}^{\lambda-2} c_i  = \sum_{i=1}^\lambda b_i c_{i-1} 
\] 
and hence:
\[
n+q+1 = \sum_{i=1}^\lambda (b_i-2)c_{i-1} + 2c_{\lambda-1} + 2c_0 =
\sum_{i=1}^\lambda (b_i-2)c_{i-1} +2 + 2q \, . 
\] 
Since $c_{i-1} \geq 1$ and $b_i \geq 2$ for $i=1, \ldots, \lambda$ we
find $\sum_{i=1}^\lambda (b_i-2) \leq n-q-1 < n$. Adding the
equalities $b_ia_i - a_{i-1}-a_{i+1}=2-b_i$ for $i=1,\ldots,\lambda$
we find that $\sum_{i=1}^\lambda a_i(b_i-2) = \sum_{i=1}^\lambda
(-b_i+2) -(a_1 + a_\lambda)$. Since $\sum_{i=1}^\lambda (-b_i+2) > -n$
and $a_1 + a_\lambda \leq 0$ we find that $\sum_{i=1}^\lambda
a_i(b_i-2) > -n$. The upper bound $\sum_{i=1}^\lambda a_i(b_i-2) \leq
2$ follows since $b_i \geq 2$ always and $a_1+a_\lambda > -2$.
\end{proof}
We will need to compare the topological Euler characteristics of $X$
and~$Y$.  The following general lemma is useful for this. We denote by
$\rH^\cdot_c(-,\qq)$ cohomology with compact supports and with
rational coefficients on the category of para-compact Hausdorff spaces.
We use the notation $e_c(-)$ for the compactly supported Euler
characteristic $e_c(-) = \sum_i (-1)^i \dim\rH^i_c(-,\qq)$; this is a
well-defined integer for separated $\cc$-schemes of finite type.
\begin{lem} \label{prelthird} Let $M,N$ be separated $\cc$-schemes of
  finite type. \\
(i) If $Z$ is a closed subscheme of $M$, then $e_c(M) = e_c(Z)
+ e_c(M - Z)$. \\
(ii) If $M \to N$ is a finite \'etale cover 
of degree $n$ then $e_c(M)= n \cdot e_c(N)$.
\end{lem}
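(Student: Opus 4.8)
The plan is as follows. Part~(i) is the standard additivity of the compactly supported Euler characteristic, and I would extract it from the long exact cohomology sequence attached to an open subscheme together with its closed complement. Part~(ii) I would then reduce, using~(i), to the single non-formal statement that the compactly supported Euler characteristic of a local system depends only on its rank.

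For~(i): let $j\colon M-Z\into M$ be the open immersion and $i\colon Z\into M$ the closed immersion, and consider on $M$ the exact sequence of sheaves $0\to j_!\qq_{M-Z}\to\qq_M\to i_*\qq_Z\to 0$. Applying $\rH^\cdot_c(M,-)$, together with the identifications $\rH^k_c(M,j_!\qq_{M-Z})\cong\rH^k_c(M-Z,\qq)$ and $\rH^k_c(M,i_*\qq_Z)\cong\rH^k_c(Z,\qq)$, produces a long exact sequence relating the groups $\rH^k_c$ of $M-Z$, $M$ and $Z$. All these groups are finite-dimensional and vanish outside a bounded range of degrees, the schemes being separated of finite type over $\cc$, so taking the alternating sum of dimensions along the sequence gives $e_c(M)=e_c(Z)+e_c(M-Z)$. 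This part is entirely routine.

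For~(ii): write $p\colon M\to N$ for the cover. Since $p$ is finite it is proper and its direct image is exact, so $\rR p_!\,\qq_M=p_*\qq_M$; since $p$ is moreover \'etale of degree $n$, the sheaf $\calL:=p_*\qq_M$ is a local system of rank $n$ on $N$, and $\rH^k_c(M,\qq)\cong\rH^k_c(N,\calL)$ for all $k$. Hence $e_c(M)=\chi_c(N,\calL)$, where $\chi_c(N,-):=\sum_k(-1)^k\dim\rH^k_c(N,-)$. Now $\chi_c(N,-)$ is additive in short exact sequences of sheaves and additive over a decomposition of $N$ into a closed subscheme and its open complement (by the argument of~(i) applied to that sheaf sequence tensored with the restriction of the coefficient sheaf), so~(ii) reduces to showing that $\chi_c(N,\calL)=(\rank\calL)\cdot e_c(N)$ for any local system $\calL$.

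That last claim is the only point that is not purely formal, and even it is standard: one may prove it by choosing a finite triangulation of a compactification of $N$ and computing $\chi_c(N,\calL)$ from the simplicial cochain complex with coefficients in $\calL$, whose term in each degree has dimension $\rank\calL$ times the corresponding constant-coefficient term; alternatively, one passes to a finite Galois cover of $N$ trivialising $\calL$ and applies the Lefschetz fixed point formula for compactly supported cohomology, the nontrivial deck transformations acting without fixed points. I expect no real obstacle, since the lemma is well known; the only subtlety is that a finite \'etale cover need not trivialise over any finite stratification of $N$ into locally closed algebraic subschemes, so~(ii) cannot be proved by a naive dévissage along such a stratification alone, and genuine topological input (triangulation, or a Lefschetz argument) is needed at exactly one step.
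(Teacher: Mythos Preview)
Your proof of~(i) is exactly the paper's: the long exact sequence of compactly supported cohomology for the pair $(M,Z)$, and the alternating sum of dimensions.

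For~(ii) your argument is correct but organised differently. The paper does not pass through the general statement $\chi_c(N,\calL)=(\rank\calL)\cdot e_c(N)$ for local systems; instead it reduces directly to the Galois case (by taking a Galois closure $P\to N$ with group $G$ and subgroup $H$ giving $M=P/H$), and then applies the Lefschetz trace formula on $M$ itself: since $G$ acts freely, every nontrivial $g$ has trace zero on $\rH^\cdot_c(M,\qq)$, so by character theory the virtual representation $[\rH_c(M)]$ is an integer multiple $m$ of the regular representation; taking $G$-invariants gives $e_c(N)=m$, while $e_c(M)=m\,\#G$. Your second sketch (pass to a Galois cover and use Lefschetz for fixed-point-free deck transformations) is essentially this same idea, just filtered through the local-system language; your first sketch (triangulate and count cells) is a genuinely different and more elementary route that avoids Lefschetz entirely. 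The local-system formulation buys you a cleaner statement that applies to any $\calL$, not just $p_*\qq_M$; the paper's direct Galois reduction buys brevity and avoids having to say anything about triangulations or about coefficients other than~$\qq$.
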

\begin{proof} The first statement follows from the long exact sequence of
compactly supported cohomology:
\[ 
\cdots \to \rH_c^i(M - Z) \to \rH_c^i(M) \to \rH_c^i(Z) \to 
\rH_c^{i+1}(M-Z) \to \cdots 
\]
As to the second statement, we may assume first of all that $M$ and
$N$ are connected.  Second, we may reduce to the case that $M \to N$
is Galois. Indeed, let $P \to N$ be a Galois closure of $M \to N$, and
denote by $G$ the group of automorphisms of $P$ such that $N =
P/G$. Let $H$ be the subgroup of $G$ such that $M=P/H$. If the result
is true for Galois covers, we find:
\[ 
e_c(N) = \frac{1}{\# G} e_c(P) = \frac{\# H}{\# G} e_c(M) = \frac{1}{n}
e_c(M) 
\]
and the result also follows in the general case. So let's assume that
$M \to N$ is Galois, with group $G$. If $V$ is a $\qq[G]$-module of
finite type, let $[V]$ be the class of $V$ in the Grothendieck group
of such modules. More generally, if $V$ is a $\zz$-graded
$\qq[G]$-module of finite type, like $\rH_c(M)$ for example, then we
denote by $[V]$ the class of $\sum_i (-1)^i V^i$. Now remark that $G$
acts freely on $M$, hence by the Lefschetz trace formula for compactly
supported cohomology (see \cite[Theorem 3.2]{dl}) we have for all
non-trivial $g \in G$ that $\sum_i (-1)^i \trace(g, \rH_c^i(M)) =
0$. By character theory it follows that $[\rH_c(M)]$ is a multiple of
$[\qq[G]]$, the class of the regular representation of $G$, say
$[\rH_c(M)]= m \cdot [\qq[G]]$ with $m \in \zz$.  Since we also have
that $\rH_c(N)=\rH_c(M)^G $ we get $e_c(N) = \dim_\qq \rH_c(M)^G = m$.
As $e_c(M)=\dim_\qq \rH_c(M) = m \cdot \#G$ the result follows.
\end{proof}
Finally we want to work with the Grothendieck-Riemann-Roch theorem.
We recall the statement and all notions that go into it. Let $M,N$ be
smooth quasi-projective varieties over~$\cc$. One has a Grothendieck
group $K_0(M)$ for coherent sheaves on~$M$. This group is isomorphic
to its analogue for locally free $O_M$-modules of finite rank, and
therefore, it has a natural ring structure. There is also a Chow ring
$\CH(M)$, coming with a natural grading. For $p \colon M \to N$ a
projective morphism one has a map $p_! \colon K_0(M) \to K_0(N)$ given
by $p_!([\FF]) = \sum_i (-1)^i [\rR^i p_* \FF ]$. Also one has a map
$p_* \colon \CH(M) \to \CH(N)$ given by proper push-forward of
cycles. The Chern character $\ch$ gives a ring homomorphism $\ch
\colon K_0(M)_\qq \to \CH(M)_\qq$. Each coherent sheaf $\FF$ on $M$
has a Todd class $\td(\FF)$ in $\CH(M)_\qq$.  The Todd class $\td(M)$
of $M$ is by definition the Todd class of the tangent bundle $T_M$
of~$M$.

The Grothendieck-Riemann-Roch theorem reads as follows.
\begin{thm}
Let $M,N$ be smooth quasi-projective varieties over~$\cc$. Let $p
\colon M \to N$ be a projective morphism and let $\FF$ be a coherent
sheaf on~$M$. Then the equality:
\[
\ch( p_! \FF) \cdot \td(N) = p_*( \ch(\FF) \cdot \td(M) )
\]
holds in~$\CH(N)_\qq$.
\end{thm}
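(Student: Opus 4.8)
The statement is the Grothendieck--Riemann--Roch theorem, which I would prove along Grothendieck's original lines (as in SGA~6, in Borel--Serre, or in Fulton's \emph{Intersection Theory}, Chapter~15): factor an arbitrary projective morphism into elementary pieces and verify the formula for each. First I would set things up functorially. For a projective morphism $p\colon M\to N$ of smooth quasi-projective $\cc$-varieties and $\alpha\in K_0(M)_\qq$, put $\tau_p(\alpha)=p_*(\ch(\alpha)\,\td(M))$ and $\sigma_p(\alpha)=\ch(p_!\alpha)\,\td(N)$. Both are well defined and additive (additivity of $\ch$, $\td$, $p_!$ and of proper push-forward of cycles), and, using the projection formula together with functoriality of $\ch$ and of $\td$, both are compatible with composition: $\tau_{q\circ p}=\tau_q\circ\tau_p$ and $\sigma_{q\circ p}=\sigma_q\circ\sigma_p$. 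Hence it is enough to check $\sigma_p=\tau_p$ on a class of morphisms generating all projective morphisms under composition. Since $p$ is projective it factors as a closed immersion $i\colon M\into\pp^n\times N$ followed by the projection $\pi\colon\pp^n\times N\to N$, so we are reduced to two cases: (A) $p=\pi$, the projection of a trivial projective bundle, and (B) $p=i$, a closed immersion of smooth varieties.

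For case (A) I would use the projective bundle formulas: $K_0(\pp^n\times N)$ is generated over $K_0(N)$ by the classes $[\calO(-j)]$, $0\le j\le n$, and $\CH(\pp^n\times N)$ is free over $\CH(N)$ on the powers of the hyperplane class $H$. By additivity and the projection formula one reduces to $\alpha=[\calO(-j)]\cdot\pi^*\beta$; the relative Euler sequence $0\to\calO\to\calO(1)^{\oplus(n+1)}\to T_{\pp^n\times N/N}\to0$ computes the Todd class of the relative tangent sheaf, and $\pi_*(H^k)$ equals $1$ for $k=n$ and $0$ for $k<n$. The required identity then becomes the classical generating-function (residue) computation underlying Hirzebruch--Riemann--Roch on $\pp^n$, carried out with the usual $x/(1-e^{-x})$ bookkeeping.

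Case (B) is the heart of the proof. Let $\calN$ be the normal bundle of $M$ in $N$, of rank $r$, and first treat the model case of the zero section $i_0\colon M\into E$, where $E=\pp(\calN\oplus\calO)$ is the projective completion of $\calN$, a smooth projective $M$-scheme. Here $i_{0,*}\calO_M$ has the Koszul resolution $\Lambda^\cdot\calN^\vee$, so $i_{0,!}[\calO_M]=[\lambda_{-1}\calN^\vee]=\sum_k(-1)^k[\Lambda^k\calN^\vee]$. Writing $x_1,\dots,x_r$ for the Chern roots of $\calN$ one has $\ch(\lambda_{-1}\calN^\vee)\,\td(\calN)=\prod_j\bigl((1-e^{-x_j})\cdot x_j/(1-e^{-x_j})\bigr)=\prod_j x_j=c_r(\calN)$. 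Combining this with the self-intersection formula $i_0^*i_{0,*}(-)=c_r(\calN)\cdot(-)$ and with injectivity of $i_{0,*}$ on the relevant Chow group (via the projective bundle $E\to M$), one deduces $\ch(i_{0,!}[\calO_M])=i_{0,*}(\td(\calN)^{-1})$; multiplying by $\td(E)$ and using $i_0^*\td(E)=\td(M)\,\td(\calN)$ yields $\sigma_{i_0}=\tau_{i_0}$. To pass from this model to an arbitrary smooth closed immersion $i$ I would invoke deformation to the normal cone: blowing up $N\times\pp^1$ along $M\times\{0\}$ produces a family, flat over $\pp^1$, of closed immersions of $M$ into smooth varieties whose special fibre is $i_0$ and whose general fibre is $i$. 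Since $\sigma$ and $\tau$ are compatible with flat pullback and with the rational equivalence between the two fibres (this is the standard specialization argument), the equality $\sigma_{i_0}=\tau_{i_0}$ propagates to $\sigma_i=\tau_i$. Composing cases (A) and (B) via the functoriality set up at the outset gives the theorem for all projective $p$.

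The step I expect to be genuinely delicate is the deformation-to-the-normal-cone argument in case (B): one must arrange the deformation so that both $p_!$ and $p_*$ specialize correctly along it, which requires care with the geometry of the blow-up and with flatness in the family. All the global content of the theorem is concentrated there, whereas case (A) and the Koszul/Chern-root identities are essentially bookkeeping. Over $\cc$ one could instead deduce the case $N=\Spec\cc$ from Hirzebruch--Riemann--Roch and bootstrap, but the argument above is purely algebraic modulo standard intersection theory.
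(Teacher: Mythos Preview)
Your outline is a correct sketch of the standard proof of Grothendieck--Riemann--Roch (factorisation into a closed immersion and a projective-bundle projection, the Koszul/Chern-root computation for the zero section, and deformation to the normal cone to pass to an arbitrary smooth closed immersion). However, there is nothing to compare against: the paper does \emph{not} prove this theorem. It is stated in Section~2 purely as a preliminary (``We recall the statement and all notions that go into it. \ldots\ The Grothendieck--Riemann--Roch theorem reads as follows.''), with no proof given or indicated; it is then used as a black box in Section~3. So your proposal is not an alternative to the paper's argument---the paper simply cites GRR as known.
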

We recall the formulas:
\[
\ch(\FF) = c_0(\FF) + c_1(\FF) + \frac{1}{2}(c_1^2(\FF) - 2c_2(\FF)) +
\mathrm{h.o.t.}
\]
and:
\[
\td(\FF) = 1 + \frac{1}{2} c_1(\FF) + \frac{1}{12} (c_1^2(\FF) + c_2(\FF)) +
\mathrm{h.o.t.}
\]
We have $c_0(\FF)= \rank (\FF)$ if $\FF$ is locally free. Finally
$c_1(\FF) = c_1(\det \FF)$. In particular $c_1(\det \rR^\cdot f_*
O_{Y'}) = c_1(f_! O_{Y'})$.

\section{Proof of Theorem \ref{main}}\label{sec3}
We recall that we assume $V$ to be connected.
We start by deriving a useful expression for $c_1(f_! O_{Y'})$. 
We recall that the singular points of $Y$ are cyclic quotient
singularities. According to \cite[Proposition~III.3.1]{bpv} 
such singularities are
rational, i.e. we have:
\[ 
\rho_*O_{Y'} = O_Y \, , \quad \rR^i \rho_* O_{Y'} = 0 \quad \textrm{for} \,
i>0 \, . 
\]
Using the Leray spectral sequence we find, writing $\bar{\pi} = \pi
\rho$, that $\rR^i \bar{\pi}_* O_{Y'} = \rR^i \pi_* O_Y$ for all
$i$. As $\pi$ is finite we obtain:
\[ 
\bar{\pi}_* O_{Y'} = \pi_* O_Y \, , \quad \rR^i \bar{\pi}_* O_{Y'} = 0 \quad
\textrm{for} \, i>0 \, . 
\]
Applying then the Leray spectral sequence to the diagram:
\[ 
\xymatrix{ Y'   \ar[dr]_{f}    \ar[r]_{\bar{\pi}} &   X \ar[d]_{h}\\
  & C     } 
\] 
we obtain $\rR^i f_* O_{Y'} = (\rR^i h_*)(\bar{\pi}_*O_{Y'})=(\rR^i
h_*)(\pi_*O_Y)$ for all $i$ and hence:
\[ f_! O_{Y'} = h_! (\pi_* O_Y) \, . \]
The Grothendieck-Riemann-Roch theorem then gives
\[ \ch(f_! O_{Y'}) \cdot \td(C) = h_* ( \ch(\pi_*O_Y) \cdot \td(X) ) \, . \]
We recall that we write $d$ for the degree of $\pi$. 
Also we recall that the sheaf $\pi_*O_Y$ is locally free of rank $d$. 
Comparing terms in degree~$0$ therefore yields:
\[
c_0(f_! O_{Y'}) = h_*(d \cdot \td(X)_{(1)} + c_1(\pi_* O_Y) ) \, .
\]
On the other hand the Grothendieck-Riemann-Roch theorem applied directly to $f$
gives:
\[
\ch(f_! O_{Y'}) \cdot \td(C) = f_*( \ch(O_{Y'}) \cdot \td(Y')) 
= f_*(\td(Y'))\, .
\]
Comparing terms in degree~$1$ we find:
\[
c_1(f_! O_{Y'}) + c_0(f_! O_{Y'}) \cdot \td(C)_{(1)} 
= f_*( \td(Y')_{(2)}) \, .
\]
Combining with our previous expression for $c_0(f_! O_{Y'})$ we get:
\[
c_1(f_! O_{Y'}) = f_*(\td(Y')_{(2)}) - 
h_* ( d \cdot \td(X)_{(1)} + c_1(\pi_* O_Y)) \cdot \td(C)_{(1)}
\]
hence:
\[
\deg \det \rR^\cdot f_* O_{Y'} 
=\deg \left\{ \frac{1}{12} f_* \left(
c_1^2(T_{Y'}) + c_2(T_{Y'}) \right) 
- h_* ( d \cdot \td(X)_{(1)} 
+ c_1(\pi_* O_Y)) \cdot \td(C)_{(1)} \right\} \, . 
\]
We are done once we show that the degrees of $c_1^2(T_{Y'})$,
$c_2(T_{Y'})$ and of $h_*(c_1 (\pi_* O_Y)) \cdot \td(C)_{(1)}$ are
bounded from above and below by linear polynomials in $d$ with
coefficients depending only on $D$ and~$h$. We start by considering
the term involving $c_1(\pi_* O_Y)$. As before write $D = \sum_{i \in
I} D_i$ for the decomposition of $D$ into its prime components. Define
$R$ to be the Weil divisor, supported on $\pi^{-1}(D)$, given as
follows: let $D_{ij}$ be a component of $\pi^{-1}(D)$ mapping onto
$D_i$, then the multiplicity of $D_{ij}$ in $R$ is $(e_{ij}-1)$. Put
$B := \pi_* R$. Note that we have a trace pairing $\pi_*O_Y
\otimes_{O_X} \pi_*O_Y \to O_X$.  This induces a mono-morphism $(\det
\pi_*O_Y)^{\otimes 2} \rightarrowtail O_X$, identifying $(\det \pi_*
O_Y)^{\otimes 2}$ with the ideal sheaf $O_X(-B)$ of $B$, as a local
computation (see e.g. \cite[III, \S 6, Proposition 13]{ser}) shows.
We obtain $c_1(\pi_* O_Y) = -\frac{1}{2} [B]$ in $\CH(X)_\qq$ so we
are done for this term if we could show that the multiplicity of each
$D_i$ in $B$ is bounded linearly in $d$. But this multiplicity is
$\sum_{j \in J_i} (e_{ij}-1)f_{ij}$ with $f_{ij}$ the degree of
$D_{ij}$ over $D_i$ and this is bounded by $d$.

Next we consider the term $c_2(T_{Y'})$. We recall
that by a version of the Gauss-Bonnet formula (see e.g. \cite[p.~416]{gh}) we
have $\deg c_2(T_{Y'}) = e_c(Y')$, the topological Euler characteristic of
$Y'$. For each $i \in I$ write $d_i := \sum_{j \in J_i} f_{ij}$. 
By Lemma \ref{prelfirstpartone} the map $\pi^{-1}(D-D^\sing) \to D-D^\sing$ is
\'etale so we have, invoking Lemma \ref{prelthird}:
\begin{eqnarray*} e_c(Y) & = & e_c(\pi^{-1}U) +
e_c(\pi^{-1}D) \\ 
& = & d e_c(U) + \sum_{i \in I} d_i e_c(D_i - D^\sing)
+ e_c(\pi^{-1}D^\mathrm{sing}) \\
& = & d e_c(U)   + \sum_{i \in I} d_i e_c(D_i - D^\sing) +
\#  \pi^{-1}D^\mathrm{sing}     
\end{eqnarray*}
with $D^\mathrm{sing}$ the singular locus of $D$.
This shows that $e_c(Y)$ is bounded from above and below 
by linear polynomials in $d$ with
coefficients depending only on $D$. Now
$ e_c(Y') = e_c(Y) + s $,
where $s$ is the total number of exceptional components $E_1,\ldots,E_s$ 
of $Y' \to Y$. 
If $y$ is a singular point of $Y$ of type $A_{n,q}$ say and mapping to $x$
on $X$ then by Lemma
\ref{prelsecond} the number of exceptional components above $y$ is bounded
from above by $n$. By Lemma \ref{prelfirstparttwo} this is again 
bounded from
above by the local degree $d_y$ of $y$ over $x$. Since for any $x$ on $X$ we
have $\sum_{y \colon y \mapsto x} d_y = d$ we obtain that $e_c(Y')$ is 
bounded from above and below 
by linear polynomials in $d$ with coefficients depending only on $D$.

Finally we consider $c_1^2(T_{Y'})$. Note that we can write $\deg
c_1^2(T_{Y'}) = (K_{Y'},K_{Y'})$, the self-intersection number of the divisor
$K_{Y'}$ on $Y'$. We compute this self-intersection number. 
By \cite[Theorem~4.1]{vis} 
the normal surface $Y$ is an Alexander scheme, implying (cf. op. cit., Note~2.4)
among other things that for the proper maps $\rho \colon Y' \to Y$ and $\pi \colon Y \to X$ one has
a projection formula for Weil divisors, provided that one works on $Y$ with the
intersection theory with $\qq$-coefficients as in \cite[Section~IIb]{mu}. Thus
we compute
\begin{eqnarray*} (K_{Y'},K_{Y'}) & = & (\rho^*K_Y + \sum_i a_i E_i, K_{Y'}) \\
	& = & (\rho^*K_Y, K_{Y'}) + \sum_i a_i(b_i-2) \\
	& = & (K_Y,K_Y) + \sum_i a_i(b_i-2) \, . 
\end{eqnarray*}
But $K_Y = \pi^*K_X + R$ so
\begin{eqnarray*} 
(K_Y,K_Y) & = & d \cdot (K_X,K_X) + 2 (\pi^*K_X,R) + (R,R) \\
	& = & d \cdot (K_X,K_X) + 2 (K_X,B) + (R,R) \, . 
\end{eqnarray*}
We are done once we show that $\sum_i a_i(b_i-2)$, $(K_X,B)$ and $(R,R)$ are
bounded from above and below by linear polynomials in $d$ with coefficients
depending only on $D$. We start with the term 
$\sum_i a_i(b_i-2)$. By Lemma \ref{prelsecond} the contribution coming from
one singularity $y$ is bounded from above by~$2$ and from below by $-n$
with $n$ determined as usual by the type of $y$. Again, since $n$ is bounded
by $d_y$ and $\sum_{y \colon y \mapsto x} d_y =d$ for all $x$ on $X$ we get
in total that the sum $\sum_i a_i(b_i-2) $ 
is bounded by at most linear polynomials in $d$ with coefficients depending
only on $D$.

The intersection number 
$(K_X,B)$ is bounded linearly in~$d$ by our description of $B$
given earlier in this proof. 

As for $(R,R)$, we obtain from 
Lemmas \ref{prelfirstpartone} and \ref{prelfirstparttwo} 
that the irreducible components of the inverse
image under $\pi$ of a $D_i$ are disjoint and hence we can write:
\[ (R,R)=\sum_{i,j} (e_{ij}-1)^2(D_{ij},D_{ij}) + 
\sum_{\substack{(i,i'),j,j'\\i \neq i'}} 
(e_{ij}-1)(e_{i'j'}-1)(D_{ij},D_{i'j'}) \, .
\]
Now we have, for each $i \in I$, that $\pi^*D_i = \sum_j e_{ij} D_{ij}$ so on
the one hand for a given $j_0$:
\[ (D_{ij_0},\pi^*D_i) = \sum_j e_{ij} (D_{ij},D_{ij_0}) =
e_{ij_0}(D_{ij_0},D_{ij_0}) \]
by the disjointness of the $D_{ij}$ and on the other hand:
\[ (D_{ij_0},\pi^*D_i) = (\pi_* D_{ij_0},D_i) = f_{ij_0}(D_i,D_i) \]
by the projection formula. Thus:
\[ (D_{ij_0},D_{ij_0}) = \frac{f_{ij_0}}{e_{ij_0}} (D_i,D_i) \]
and hence for a given $i$:
\[ \sum_j (e_{ij}-1)^2(D_{ij},D_{ij}) = \sum_j (e_{ij}-1)^2
\frac{f_{ij}}{e_{ij}}(D_i,D_i) \, . \]
Remark that $0 \leq \sum_j (e_{ij}-1)^2
\frac{f_{ij}}{e_{ij}} < d$ and we are done for the first term 
$\sum_{i,j} (e_{ij}-1)^2(D_{ij},D_{ij})$. Finally we can write:
\[
\sum_{\substack{(i,i'),j,j'\\ i \neq i'}} (e_{ij}-1)(e_{i'j'}-1)
(D_{ij},D_{i'j'}) = \sum_{\substack{(i,i')\\ i \neq i'}}
\sum_{x \in D_i \cap D_{i'}} \sum_{y \mapsto x} \sum_{j,j'}
(e_{ij}-1)(e_{i'j'}-1)(D_{ij},D_{i'j'})_y \, .
\]
But as is stated in Lemma \ref{prelfirstparttwo} for each $ y \mapsto
x$ with $x \in D_i \cap D_{i'}$ there is exactly one pair $(j,j')$
such that $(D_{ij},D_{i'j'})_y \neq 0$. So, the summation over $j$ and
$j'$ can be replaced by a single term with indices $j(y)$
and~$j'(y)$. We can compute $(D_{ij(y)},D_{i'j'(y)})_y$ as follows.
Let $A_{n,q}$ be the type of~$y$. By what we said before
Lemma~\ref{prelsecond}, $n{\cdot}D_{ij(y)}$ is a Cartier divisor
at~$y$. But then as $D_{i'j'(y)}$ is smooth the intersection number
$(D_{ij(y)},D_{i'j'(y)})_y$ is just given as $1/n$ times the valuation in
$O_{D_{i'j'(y)},y}$ of a local function defining $n \cdot D_{ij(y)}$ around
$y$ on $Y$. Since this function is a local coordinate around $y$ on
$D_{i'j'(y)}$ we find $(D_{ij(y)},D_{i'j'(y)})_y$ to be equal to~$1/n$. By
Lemma~\ref{prelfirstparttwo} there exist positive integers $m_1,m_2$
such that $e_{ij(y)}=nm_1$, $e_{i'j'(y)}=nm_2$, $d_y = nm_1m_2$ hence
\begin{eqnarray*} (e_{ij(y)}-1)(e_{i'j'(y)}-1) (D_{ij(y)},D_{i'j'(y)})_y 
& \leq &
e_{ij(y)} e_{i'j'(y)} (D_{ij(y)},D_{i'j'(y)})_y \\ 
& = & (nm_1)(nm_2)/n = nm_1m_2 =
d_y \, . 
\end{eqnarray*}
Keeping in mind that $\sum_{y \colon y \mapsto x} d_y = d$ for all $x$ on
$X$ we find that:
\[ \sum_{\substack{(i,i')\\ i \neq i'}}
\sum_{x \in D_i \cap D_{i'}} \sum_{y \mapsto x} 
(e_{ij(y)}-1)(e_{i'j'(y)}-1) (D_{ij(y)},D_{i'j'(y)})_y \]
is bounded by a linear polynomial in $d$ with coefficients depending only on
$D$. This finishes the proof.

\section{Alternative proofs} \label{sec4}
H{\'e}l{\`e}ne Esnault and Eckart Viehweg have proposed another proof
of (the upper bound of) Theorem~\ref{main}, based on Arakelov's
inequality. In fact, their method leads to a more precise version.
\begin{thm}[Esnault-Viehweg]\label{thm_esnault-viehweg}
Assumptions as in Theorem~\ref{main}. Assume moreover that $h\colon
X\to C$ is semi-stable (i.e., the singularities in its fibres are
ordinary double points), with connected fibres, and that $D=D^\hor
+h^{-1}D_C$ with $D^\hor\to C$ {\'e}tale and with $D_C$ a divisor
on~$C$. Let $g(C)$ be the genus of~$C$, and $g(F)$ the genus of any
smooth fibre~$F$ of~$h\colon X\to C$. Let $S\subset C$ be the set of
$s\in C$ such that the fibre $X_s$ of $h$ is singular. Then, for every
finite {\'e}tale $\pi\colon V\to U$ we have:
\[
\deg\det\rR^\cdot f_* O_{Y'} \leq 
\left(g(F)+ \frac{1}{2}(D^\hor,F)\right){\cdot}
\left(g(C)+2\# D_C + \frac{1}{2}(1+\# S)\right){\cdot}\deg(\pi)\, .
\]\
\end{thm}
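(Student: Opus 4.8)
The plan is to reduce the statement to Arakelov's inequality for semi-stable families of curves. The key observation is that, under the stronger hypotheses on $h$ and $D$, the composed morphism $f\colon Y'\to C$ (after possibly replacing $Y'$ by a further blow-up, or by passing to the semi-stable reduction in the sense of the relative minimal model / stable model over $C$) is again a semi-stable family of curves, whose generic fibre $F_Y$ is the normalisation of $F$ in the restriction $V|_{U\cap F}$ of the \'etale cover. First I would compute the genus of $F_Y$: since $D^\hor\to C$ is \'etale, the divisor $D\cap F$ is reduced of degree $(D^\hor,F)$ on $F$, the cover $F_Y\to F$ is branched only along $D\cap F$, and by Riemann--Hurwitz $2g(F_Y)-2 = \deg(\pi)\cdot\bigl(2g(F)-2+(D^\hor,F)\bigr) - (\text{reduction in ramification})$, so in any case $g(F_Y)-1 \leq \deg(\pi)\cdot\bigl(g(F)-1+\tfrac12(D^\hor,F)\bigr)$, which accounts for the first factor in the asserted bound.

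\textbf{Reduction to Arakelov's inequality.} Arakelov's inequality states that for a semi-stable family $f\colon \calY\to C$ of curves of genus $q\geq 2$, with $\calY$ a smooth surface, one has $\deg f_*\omega_{\calY/C} \leq \tfrac12 q\cdot\bigl(2g(C)-2+\#\Sigma\bigr)$, where $\Sigma\subset C$ is the set of points of bad reduction. The quantity $\deg\det\rR^\cdot f_*O_{Y'}$ is, by Serre duality on the fibres and the defining property of the determinant of cohomology, equal (up to a sign and a term $c_1(\calO_C)$-type contribution that vanishes in degree) to $\deg f_*\omega_{Y'/C}$; more precisely $\det\rR^\cdot f_*O_{Y'} \cong \bigl(\det f_*\omega_{Y'/C}\bigr)^{\vee}\otimes(\text{something trivial on }C)$, so $\deg\det\rR^\cdot f_*O_{Y'} = \deg f_*\omega_{Y'/C}$. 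Thus it suffices to bound $\deg f_*\omega_{Y'/C}$. I would apply Arakelov's inequality to a semi-stable model $\calY\to C$ of the family $Y'\to C$ with genus $q = g(F_Y)$; the set of bad-reduction points $\Sigma$ lies above $S\cup \mathrm{Supp}(D_C)$, and a careful count — using that over a point of $S$ the fibre acquires at worst the semi-stable degenerations induced from that of $X_s$ together with the ramification of the cover, and over a point of $D_C$ one picks up bad reduction from the horizontal branch locus crossing a whole fibre — gives $\#\Sigma \leq \bigl(2g(C)-2\bigr) + 4\#D_C + (1+\#S)$, yielding the second factor $g(C)+2\#D_C+\tfrac12(1+\#S)$ after dividing by $2$. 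Combining, $\deg f_*\omega_{Y'/C}\leq q\cdot\bigl(g(C)+2\#D_C+\tfrac12(1+\#S)\bigr)$ and substituting the bound for $q=g(F_Y)$ gives the theorem.

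\textbf{The main obstacle} I expect is controlling the set $\Sigma$ of points of bad reduction of the semi-stable model of $Y'\to C$, and ensuring the passage from $Y'$ to a genuine semi-stable model does not increase $\deg f_*\omega$. The subtlety is that $Y'$ itself need not be semi-stable over $C$ even when $X$ is: the cover $V\to U$ can ramify along horizontal or vertical divisors, and the cyclic quotient singularities of $Y$ resolved by $\rho$ interact with the fibration. One must argue that replacing $Y'$ by its relative semi-stable (or relative minimal) model over $C$ can only decrease, or leave unchanged, $\deg f_*\omega_{Y'/C}$ — this follows from the fact that semi-stable reduction is obtained by contracting $(-1)$- and $(-2)$-curves in fibres, operations under which $f_*\omega$ is unchanged, together with possibly a finite base change which we avoid by working directly with the given $C$ — and simultaneously control which fibres become bad. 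A clean way to organise this is to note that a fibre $X_s$ is smooth precisely when $s\notin S$, that $D^\hor$ meets every fibre transversally in $(D^\hor,F)$ distinct points and does not create bad reduction of $X_s$ for $s\notin S$, and that bad reduction of $Y'$ over $s\notin S\cup\mathrm{Supp}(D_C)$ can only come from wild phenomena which are absent in characteristic $0$ — hence such $s$ give good reduction, and the count of $\#\Sigma$ reduces to the explicit local analysis over $S$ and over $\mathrm{Supp}(D_C)$, where the factor $4\#D_C$ and the $\tfrac12(1+\#S)$ absorb the worst cases. Filling in this local bad-reduction count is the real content of the argument; the rest is the formal manipulation of Arakelov's inequality and Riemann--Hurwitz sketched above.
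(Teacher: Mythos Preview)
Your overall strategy---bound $\deg\det\rR^\cdot f_*O_{Y'}$ by $\deg f_*\omega_{Y'/C}$, control the fibre genus via Riemann--Hurwitz, and finish with Arakelov's inequality---is exactly the paper's. But there is a genuine gap at the step where you ``avoid'' the base change. The family $Y'\to C$ need not admit a semi-stable model over $C$ itself. Over a point $s\in D_C$ the whole fibre $h^{-1}s$ lies in~$D$, so the cover $\pi$ can be totally ramified along it; the corresponding fibre of $Y'$ then has non-reduced components, and no amount of contracting $(-1)$- or $(-2)$-curves removes non-reducedness. Semi-stable reduction genuinely requires a finite base change $b\colon C'\to C$ with ramification indices over $D_C$ divisible enough. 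The paper carries this out: it chooses $b$ unramified outside $E_C:=\{s_0\}\cup D_C$ (the extra point $s_0$ is needed to make such a $b$ exist), forms $Y''\to C'$ by pull-back, normalisation and minimal resolution, and proves a comparison lemma injecting $b^*f_*\omega_{Y'/C}$ into $b^*O_C(E_C)\otimes f'_*\omega_{Y''/C'}$. Arakelov is then applied to $f'\colon Y''\to C'$, and everything is divided by~$\deg b$.

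This is also why your bound $\#\Sigma \leq (2g(C)-2)+4\#D_C+(1+\#S)$ cannot be right: it mixes a genus term into what should be a cardinality of points on~$C$, and is visibly reverse-engineered from the answer. In the paper the constant $g(C)+2\#D_C+\tfrac12(1+\#S)$ arises from three separate sources after dividing by~$\deg b$: the twist by $O_C(E_C)$ contributes $\#E_C=1+\#D_C$; Hurwitz for $b$ gives $(2g(C')-2)/\deg b\leq 2g(C)-2+1+\#D_C$; and $\#(f'\Sing f')/\deg b\leq \#S+\#D_C$. Summing these with the Arakelov factor $\tfrac12$ on the last two yields the stated constant. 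Two smaller points: the equality $\deg\det\rR^\cdot f_*O_{Y'}=\deg f_*\omega_{Y'/C}$ you assert is in general only an inequality $\leq$, because $\rR^1f_*O_{Y'}$ may have torsion and $f_*O_{Y'}$ is $O_{\wt C}$ for the Stein factorisation $\wt C\to C$ (the trace form gives $\deg\det f_*O_{Y'}\leq 0$); and the generic fibre of $f$ need not be connected, so the ``genus'' is really $\sum_j g_j$ over its components, which the paper bounds by $(g(F)+\tfrac12(D^\hor,F))\deg(\pi)$ via Hurwitz applied componentwise.
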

\begin{proof}
We may and do assume that $V$ is connected. 

The results of the beginning of Section~\ref{sec2} show the following
statements. The reduced fibres of $f\colon Y'\to C$ are normal
crossings divisors on~$Y'$. For $s\in C$ with $s\not\in D_C$, the
fibre $f^{-1}s$ is semi-stable. Let $\Sing(f)$ denote the singular set
of~$f$, i.e., the set where the tangent map of $f$ vanishes. Then
$f\Sing(f)\subset S\cup D_C$. 

We write $\omega_{Y'}:=\Omega^2_{Y'}$ and $\omega_C:=\Omega^1_C$ for
the dualising sheaves of $Y'$ and~$C$. We define
$\omega_{Y'/C}:=\omega_{Y'}\otimes(f^*\omega_C)^\vee$, the relative
dualising sheaf for~$f$. We note that $\omega_{Y'/C}$ coincides with
$\Omega^1_{Y'/C}$ on the complement of~$\Sing(f)$. By
\cite[Theorem~5.1]{lipman}, we have $(\rR^1f_*O_{Y'})^\vee =
f_*\omega_{Y'/C}$. On the other hand, $f_*O_{Y'}$ is the $O_C$-algebra
corresponding to the Stein factorisation $Y'\to \wt{C}\to C$
of~$f$. As $Y'$ is reduced, the same is true for~$\wt{C}$, hence the
trace form gives an injection of $(\det O_{\wt{C}})^{\otimes 2}$ into
$O_C$, hence $\deg\det f_*O_{Y'}\leq 0$.  We get:
\begin{eqn}\label{eqn4.2}
\deg\det\rR^{\cdot}f_*O_{Y'} = \deg c_1 \rR^0f_*O_{Y'} 
-\deg c_1\rR^1f_*O_{Y'} \leq -\deg c_1\rR^1f_*O_{Y'}
\end{eqn}
The coherent $O_C$-module $\rR^1f_*O_{Y'}$ sits in an exact sequence:
\begin{eqn}\label{eqn4.3}
0 \to (\rR^1f_*O_{Y'})_\tors \to \rR^1f_*O_{Y'} \to
\ol{\rR^1f_*O_{Y'}}
\to 0\, ,
\end{eqn}
given by its torsion submodule and its locally free quotient. This
implies:
\begin{eqn}\label{eqn4.4}
-\deg c_1(\rR^1f_*O_{Y'}) = -\deg c_1(\ol{\rR^1f_*O_{Y'}}) 
-\deg c_1(\rR^1f_*O_{Y'})_\tors \leq -\deg c_1(\ol{\rR^1f_*O_{Y'}})\, .
\end{eqn}
As $(\rR^1f_*O_{Y'})^\vee=(\ol{\rR^1f_*O_{Y'}})^\vee$, (\ref{eqn4.2})
and~(\ref{eqn4.4}) give:
\begin{eqn}\label{eqn4.5}
\deg\det\rR^{\cdot}f_*O_{Y'} \leq \deg\det f_*\omega_{Y'/C}\, .
\end{eqn}

The main idea of the proof is now to use Arakelov's inequality (to be
explained below), but for that we need a finite base change $b\colon
C'\to C$, with $C'$ a smooth connected projective complex curve, after
which $Y'$ admits a semi-stable model $Y''\to C'$. This precisely means
that the ramification indices of $b$ at the $s\in D_C$ must be
sufficiently divisible. We pick any $s_0$ in $C-D_C$. Then
$\pi_1(C-(\{s_0\}\cup D_C))$ is freely generated by the usual kind of
generators $a_i, b_i$ with $1\leq i\leq g(C)$, and $\gamma_s$ for
$s\in D_C$. This shows that a $b\colon C'\to C$ as required does
exist, unramified outside $E_C:=\{s_0\}\cup D_C$. We pick such a~$b$.

Let $f'\colon Y''\to C'$ be obtained by pull-back via $b$ of $f\colon
Y'\to C$, then normalisation, and then minimal resolution of
singularities. Then $f'$ is semi-stable (see~\cite{ast86},
or \cite[p.~83]{esnault-viehweg}), 
and we have a commutative diagram:
\begin{eqn}\label{eqn4.6}
\xymatrix{
Y'' \ar[r]^{b'}\ar[d]^{f'} & Y'\ar[d]^f\\
C'\ar[r]^b & C
}  
\end{eqn}
\begin{lem}\label{lem4.7}
In this situation, there is an injection of $b^*f_*\omega_{Y'/C}$ into
$b^*O_C(E_C)\otimes f'_*\omega_{Y''/C'}$.
\end{lem}
\begin{proof}
To start with, the projection formula gives $f_*\omega_{Y'/C} =
(f_*\omega_{Y'})\otimes\omega_C^\vee$, and, similarly,
$f'_*\omega_{Y''/C'} =
(f'_*\omega_{Y''})\otimes\omega_{C'}^\vee$. Pull-back of 2-forms along
$b'$ gives a morphism of $O_{Y'}$-modules
$(b')^*\omega_{Y'}\to\omega_{Y''}$, which is generically an
isomorphism. Applying $f'_*$ to this morphism, and composing with the
natural morphism $b^*f_*\omega_{Y'}\to f'_*(b')^*\omega_{Y'}$ gives a
morphism of locally free $O_{C'}$-modules $b^*f_*\omega_{Y'}\to
f'_*\omega_{Y''}$ which is generically an isomorphism. Pull-back of
1-forms via $b$ gives a morphism of invertible $O_{C'}$-modules
$b^*\omega_C\to \omega_{C'}$, which is an isomorphism
outside~$b^{-1}E_C$. If $P$ is in $C'$ and $\omega$ is a generating
1-form at $b(P)$, then $b^*\omega$ has a zero of order $e(P)-1$ at
$P$, where $e(P)$ is the ramification index of~$b$ at~$P$. It follows
that we have an inclusion of $b^*O_C(-E_C)\otimes\omega_{C'}$ into
$b^*\omega_{C}$. Dually, this gives an inclusion of
$b^*\omega_{C}^\vee$ into
$b^*O_C(E_C)\otimes\omega_{C'}^\vee$. Combining all this gives:
\[
b^*f_*\omega_{Y'/C} = b^*f_*\omega_{Y'}\otimes b^*\omega_C^\vee\to
f'_*\omega_{Y''}\otimes b^*O_C(E_C)\otimes\omega_{C'}^\vee = 
b^*O_C(E_C)\otimes f'_*\omega_{Y''/C'}\, .
\]
\end{proof}
We are now ready to invoke Arakelov's inequality
(see~\cite[p.~58]{ast86}):
\begin{eqn}\label{eqn4.8}
\deg\det(f'_*\omega_{Y''/C'}) \leq
\frac{1}{2}\rank(f'_*\omega_{Y''/C'}){\cdot}
\left(2g(C')-2+\# f'\Sing f'\right)\, .
\end{eqn}
The rank of $f'_*\omega_{Y''/C'}$ equals that of $f_*\omega_{Y'/C}$,
both being the sum of the genera of the connected components of
the geometric generic fibre of~$f$.
Combining this with our previous inequalities, we obtain:
\begin{eqn}\label{eqn4.9}
\begin{aligned}
\deg\det\rR^{\cdot}f_*O_{Y'} & \leq \deg\det f_*\omega_{Y'/C}
= \frac{1}{\deg b}\deg\det(b^*f_*\omega_{Y'/C}) \leq \\
& \leq \frac{1}{\deg b}
\deg\det\left(b^*O_C(E_C)\otimes f'_*\omega_{Y''/C'}\right) = \\
& = \frac{1}{\deg b} \left( 
\deg(b^*O_C(E_C)){\cdot}\rank f'_*\omega_{Y''/C'} + 
\deg\det f'_*\omega_{Y''/C'}\right) \leq \\
& \leq (\# E_C)\rank f_*\omega_{Y'/C} + 
\frac{1}{2\deg b}(\rank f_*\omega_{Y'/C}){\cdot}(2g(C')-2+\#(f'\Sing
f'))\, .
\end{aligned}
\end{eqn}
Finally, we bound the quantities in the last term. Letting $d_j$ be
the degrees of the connected components $Z_j$ of the geometric generic
fibre of~$f$ over the geometric generic fibre of~$h$, and $g_j$ the
genera of the~$Z_j$, we have $\sum_i d_i = \deg(\pi)$, and Hurwitz's
formula gives:
\[
2g_i-2 = d_i{\cdot}(2g(F)-2)+\deg R_i, \quad \deg R_i \leq
(d_i-1)(D^\hor,F)\, .
\]
This leads to:
\begin{eqn}\label{eqn4.10}
\rank f_*\omega_{Y'/C} \leq (g(F)+(D^\hor,F)/2)\deg(\pi) \, .
\end{eqn}
For $g(C')$, we note that $b\colon C'\to C$ is unramified
outside~$E_C=\{s_0\}\cup D_C$. Hurwitz's formula gives:
\begin{eqn}\label{eqn4.11}
\frac{1}{\deg b}(2g(C')-2) \leq 2g(C)-2 + 1+\# D_C \, .
\end{eqn}
At the beginning of the proof we noticed that $f\Sing f$ is contained
in $S\cup D_C$. Therefore, $f'\Sing f'$ is contained in $b^{-1}(S\cup
D_C)$. So:
\begin{eqn}\label{eqn4.12}
\frac{1}{\deg b}\#(f'\Sing f') \leq \# S + \# D_C\, .
\end{eqn}
Combining (\ref{eqn4.9})--(\ref{eqn4.12}) we get:
\begin{eqn}\label{eqn4.13}
\deg\det \rR^{\cdot}f_*O_{Y'} \leq 
\left(g(F)+ \frac{1}{2}(D^\hor,F)\right){\cdot}
\left(g(C)+2\# D_C + \frac{1}{2}(1+\# S)\right){\cdot}\deg(\pi)\, .
\end{eqn}
This ends our proof of Theorem~\ref{thm_esnault-viehweg}.
\end{proof}
We remark that instead of invoking Arakelov's inequality it is also
possible, again at least for the upper bound implied by Theorem
\ref{main}, to invoke the Bogomolov-Miyaoka-Yau inequality. 
Indeed, our work done 
at the beginning of Section \ref{sec3} showed that Theorem \ref{main} 
can be reduced to providing linear bounds in $\deg(\pi)$ for the degrees of
each of the three terms $c_1^2(T_{Y'})$,
$c_2(T_{Y'})$ and $h_*(c_1 (\pi_* O_Y)) \cdot \td(C)_{(1)}$. The
latter two were relatively easily to deal with, whereas the term 
$c_1^2(T_{Y'})$ required significantly more work. Now instead of
calculating the $c_1^2$ directly, one can also remark that there 
exist \emph{a priori} inequalities relating the $c_2$ and the $c_1^2$.
Table~10 in Chapter VI of \cite{bpv} first of all shows that for
smooth compact connected complex surfaces which are not of general
type $c_1^2$ is bounded absolutely from above by~9. The
Bogomolov-Miyaoka-Yau inequality \cite[Theorem VII.4.1]{bpv} says that
for a smooth compact connected complex surface which is of general type
the inequality $c_1^2 \leq 3c_2$ holds. Invoking these results one
obtains yet another proof of Theorem \ref{main}.

\section{An arithmetic analogue} \label{arithmetic}
In \cite{report} an algorithm is given that computes the
$\GL_2(\ff_\lambda)$ Galois representations associated to a given
normalised Hecke eigenform $f$ of level $1$, in time polynomial in
$\#(\ff_\lambda)$, if one admits the Generalised Riemann Hypothesis
for number fields. Here $\lambda$ runs through the finite degree~$1$
places of the field of coefficients of the form.  By a famous argument
due to R. Schoof, this leads to an algorithm that on input a prime
number $p$ computes the $p$-th coefficient of the Fourier development
of $f$, in time polynomial in $\log p$. As a consequence, the number
of vectors with half length-squared equal to $p$ in a fixed even
unimodular lattice can be computed in time polynomial in~$\log p$.

Generalisations of the above results seem possible in various
different directions. For example, one could look at the case of
mod-$\ell$ Galois representations occurring in the \'etale cohomology
of a given smooth, projective and geometrically connected surface $S$
over $\qq$. Letting $\ell$ be a prime number, one has the cohomology
groups $ \rH^i(S_{\Qbar, \mathrm{et}}, \ff_\ell) $ for $ 0 \leq i \leq
4$, being finite dimensional $\ff_\ell$-vector spaces with
$\mathrm{Gal}(\Qbar/\qq)$-action. It seems reasonable to suspect that,
again, there is an algorithm that on input a prime $\ell$ computes
these cohomology groups, with their $\mathrm{Gal}(\Qbar/\qq)$-action,
in time polynomial in $\ell$.  Once such an algorithm is known, one
also has an algorithm that, on input a prime $p$ of good reduction of
$S$, gives the number of points $\# S(\ff_p)$ of $S$ over $\ff_p$ in
time polynomial in $\log p$. This result would be of interest because
the known $p$-adic algorithms for finding such numbers have running
time exponential in $\log p$.

The idea in \cite{report} to compute mod-$\ell$ \'etale cohomology is
to trivialise the sheaves involved, using suitable covers of (modular)
curves, and to reduce to computing in the $\ell$-torsion of their
Jacobians. In our case, using a Lefschetz fibration, one can first
reduce to computing cohomology groups $ \rH^1( U_{\Qbar}, \FF_l) $
where $U$ is a non-empty open subscheme of $\pp^1_\qq$ determined by $S$
and the chosen Lefschetz fibration and where $\FF_l$ are certain
\'etale locally constant sheaves of $\ff_\ell$-vector spaces of fixed
dimension, say $r$. For each $\ell$ let $V_\ell :=
\underline{\mathrm{Isom}}_U(\ff_\ell^r, \FF_l)$. Then each cover
$V_\ell \to U$ is finite Galois with group $G \cong \GL_r(\ff_\ell)$,
and the group $\rH^1( U_{\Qbar}, \FF_l) $ can be related to
$\rH^1(V_{\ell,\Qbar},\ff_\ell^r)$ which sits in the $\ell$-torsion of
the Jacobian of the smooth projective model $\overline{V_\ell}$ of 
$V_\ell$. It is our hope that methods as in \cite{report} can show
that we have a polynomial algorithm for computing these cohomology
groups once we have a bound for the Faltings height of
$\overline{V_\ell}$ that is polynomial in~$\ell$. 

A recent result of 
Bilu and Strambi \cite[Theorem 1.2]{bs} suggests at least 
the existence of an 
exponential bound: it implies that if  $Y_\Qbar \to
\pp^1_\Qbar$ is any connected degree $d \geq 2$ cover, 
unramified outside a finite subset $B$ of $\pp^1_\Qbar$, 
then $Y_\Qbar$ has
a plane model given by an equation $F(u,v)$ with coefficients in
$\Qbar$ such that
$\deg_u F = d$, $\deg_v F \leq d \cdot (\# B/2)$, 
and such that the affine logarithmic height of $F$ is bounded from
above by:
\[ (h+1)(d^3 \cdot \# B)^{5d^2 \cdot \#B + 12d} \, . \]
Here $h$ is the maximum of the
logarithmic projective heights of the elements of $B$.

Note, however, that in our situation we can take more restrictive
assumptions than in the result of Bilu and Strambi. For one thing, our
covers $V_\ell \to U$ are defined over $\qq$. For another, our covers
$V_\ell \to U$ 
have the property that there is a nonempty 
open subscheme $U'$ of $\pp^1_\zz$ containing $U$ 
such that each $V_\ell$ extends to a
finite \'etale cover of $U'_{\zz[1/\ell]}$. We would like therefore to
propose the following arithmetic analogue of the main theorem of this
note.
\begin{conj}\label{conj}
Let $U\subset\pp^1_\zz$ be a nonempty open subscheme. Then there are
integers~$a$ and~$b$ with the following property. For any prime
number~$\ell$, and for any connected finite {\'e}tale cover $\pi\colon
V\to U_{\zz[1/\ell]}$, the absolute value of the Faltings height of the
normalisation of $\pp^1_\qq$ in the function field of $V$ is bounded
by $\deg(\pi)^a{\cdot}\ell^b$. 
\end{conj}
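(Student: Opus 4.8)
The plan is to reduce the statement to a bound on the (stable) Faltings height of a single curve and then to feed this into the polynomial bounds for Arakelov invariants of Belyi curves. Write $U_\qq\subset\pp^1_\qq$ for the generic fibre of $U\subset\pp^1_\zz$, let $B=\pp^1_\qq\setminus U_\qq$, a reduced divisor whose degree depends only on~$U$, and let $\ol V$ be the smooth projective curve over~$\qq$ obtained as the normalisation of $\pp^1_\qq$ in the function field of~$V$; the Faltings height in the statement is that of the Jacobian $\Jac\ol V$, and we work with the stable Faltings height, which is all that is needed below. If $\deg(\pi)=1$ there is nothing to prove, so assume $\deg(\pi)\geq 2$; if moreover $B$ has at most two geometric points then $U_\Qbar$ has abelian fundamental group and every connected $\pi$ yields $\ol V$ of genus~$0$, hence $h_F=0$. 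So assume $\#B\geq 3$. Since $\ol V\to\pp^1_\qq$ has degree $\deg(\pi)$ and is étale over $U_\qq$, Riemann--Hurwitz gives $2g(\ol V)-2\leq(\#B-2)\deg(\pi)$, so $g(\ol V)=O(\deg(\pi))$, and likewise $\dim\Jac\ol V=O(\deg(\pi))$, with implied constants depending only on~$U$ (passing, if necessary, to the field of constants of $\ol V$, of degree dividing $\deg(\pi)$, and arguing componentwise).

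The key point is that $\ol V$ carries a Belyi map of degree linear in $\deg(\pi)$. Apply Belyi's theorem to the \emph{fixed} finite set~$B$: there is $\beta\colon\pp^1_\qq\to\pp^1_\qq$ of degree $\delta_0=\delta_0(U)$, unramified outside $\{0,1,\infty\}$, with $\beta(B)\subseteq\{0,1,\infty\}$. Then $\beta\circ(\ol V\to\pp^1_\qq)$ is unramified over every point outside $\{0,1,\infty\}$ --- such a point has $\beta$-fibre disjoint from~$B$, where $\ol V\to\pp^1_\qq$ is étale, and $\beta$ is itself unramified there --- hence it is a Belyi map $\ol V\to\pp^1_\qq$ of degree $\delta_0\deg(\pi)$. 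Therefore the Belyi degree of $\ol V$ is $O(\deg(\pi))$, and Javanpeykar's polynomial bounds for the Arakelov invariants of a Belyi curve --- for the Faltings height, and also for $\hat\omega^2$, the discriminant, and Faltings' $\delta$-invariant --- give $|h_F(\Jac\ol V)|\leq c\,\deg(\pi)^a$ with $a,c$ depending only on~$U$; this is even stronger than the asserted bound, as one may take $b=0$ (absorbing $c$ into~$a$, since $\deg(\pi)\geq 2$ in the non-trivial case).

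Where is the work? The reduction above is formal; the substance sits entirely in the archimedean analytic estimates underlying Javanpeykar's theorem --- uniform bounds, polynomial in $\deg(\pi)$, for Faltings' $\delta$-invariant and the Arakelov--Green function of the branched covers $\ol V(\cc)\to\pp^1(\cc)$ with the fixed branch set~$B(\cc)$. The non-archimedean side, by contrast, is elementary, and is also where an $\ell$-dependence would enter if one works directly: running the arithmetic Grothendieck--Riemann--Roch / Noether formula for a semistable model of $\ol V$ over a ring of integers $O_K$ (over which $\ol V$ acquires semistable reduction, the stable height being insensitive to~$K$), one writes $12\,h_F(\Jac\ol V)=\hat\omega^2+\sum_{\mathfrak p}\delta_{\mathfrak p}\log N\mathfrak p+\delta_{\mathrm{Fal}}(\ol V(\cc))+c_g$ with $\delta_{\mathfrak p}$ counting the nodes over~$\mathfrak p$; the finite terms vanish outside~$\ell$, outside the fixed finite set of primes modulo which $\pp^1_\zz\setminus U$ degenerates, and outside the primes dividing a ramification index of $\ol V\to\pp^1_\qq$ (all $\leq\deg(\pi)$, with wild depth $O(\log\deg(\pi))$), and are bounded by $\deg(\pi)^{O(1)}$ there; at~$\ell$, where the cover is given only generically and the reduction may be wild, the degree-$\deg(\pi)$ extension of local fields $\qq_\ell(\ol V)/\qq_\ell(t)$ has different of $\ell$-valuation at most $1+\log_\ell\deg(\pi)$, bounding the $\ell$-conductor of $\Jac\ol V$, the number of nodes over~$\ell$, and the $\ell$-part of $\hat\omega^2$ by $\deg(\pi)^{O(1)}$, hence their total contribution by $\deg(\pi)^{O(1)}\ell$. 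Thus the only serious obstacle is the archimedean estimate --- exactly the point where the clean formal structure of the proof of Theorem~\ref{main} (Grothendieck--Riemann--Roch together with intersection theory on the normal surface) does not transfer to the arithmetic setting for free.
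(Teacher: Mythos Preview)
The statement you are addressing is a \emph{conjecture} in the paper, not a theorem: the authors give no proof, only motivation (the analogy with Theorem~\ref{main}) and one worked example ($X_1(\ell)$). So there is nothing in the paper to compare your argument against.

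That said, your first two paragraphs do constitute a valid proof, and this is in fact how the conjecture was subsequently settled. The reduction is exactly right: since the branch locus $B\subset\pp^1_\qq$ is fixed (independent of~$\pi$ and~$\ell$), one applies Belyi's algorithm \emph{once} to~$B$ to obtain a fixed $\beta\colon\pp^1_\qq\to\pp^1_\qq$ of degree~$\delta_0$ with $\beta(B)\subseteq\{0,1,\infty\}$ and critical values in $\{0,1,\infty\}$; composing with $\ol V\to\pp^1_\qq$ gives a Belyi map of degree $\delta_0\deg(\pi)$, so the Belyi degree of $\ol V$ is $O(\deg(\pi))$ with a constant depending only on~$U$. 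Javanpeykar's polynomial bounds for the Faltings height in terms of the Belyi degree then finish the argument, and indeed yield $b=0$. Your handling of the possibly non-trivial field of constants is brief but adequate: the geometric components of $\ol V_{\Qbar}$ each carry a Belyi map of degree at most $\delta_0\deg(\pi)$, there are at most $\deg(\pi)$ of them, and the stable Faltings height is additive over products. Note that Javanpeykar's work postdates the paper and was written in part precisely to resolve this conjecture; you should cite it explicitly rather than treat it as folklore.

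Your third paragraph is not part of the proof; it is a sketch of what a direct attack (without Javanpeykar's black box) would look like, and several of the claims there --- in particular the bound on the number of nodes over~$\ell$ and the control of $\hat\omega^2$ at~$\ell$ via the different of a local field extension --- are heuristic rather than established. If you keep this paragraph, flag it clearly as informal motivation, not as an alternative argument.
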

As an example, let us mention that for the family of modular curves
$X_1(\ell)$, all covers of the $j$-line, where one can take
$U=\pp^1_\zz-\{0,1728,\infty\}$, it is proved in~\cite{report} that
the Faltings height is bounded above as $O(\ell^2\log \ell)$. It is
tempting to interpret $\ell^2$ (up to a constant factor) as the degree
of $X_1(\ell)$ over the $j$-line, and the factor $\log \ell$ as coming
from the ramification at~$\ell$. As in our application the degree of
$\pi$ itself depends polynomially on~$\ell$, it is irrelevant for this
application if the bound in the conjecture is
$\deg(\pi)^a{\cdot}\ell^b$ or $\deg(\pi)^a{\cdot}(\log \ell)^b$. Of
course, one may also ask about the conjecture with this stronger
bound.

\noindent Email address Bas Edixhoven: \verb+edix@math.leidenuniv.nl+ \\
\noindent Email address Robin de Jong: \verb+rdejong@math.leidenuniv.nl+ \\
\noindent Email address Jan Schepers: \verb+janschepers1@gmail.com+ \\  

\noindent Mathematisch Instituut \\
Universiteit Leiden \\
Postbus 9512 \\
2300 RA Leiden \\
Nederland

\vspace{\baselineskip}
\noindent Departement Wiskunde\\ 
Katholieke Universiteit Leuven\\ 
Celestijnenlaan 200b\\
3001 Heverlee\\
Belgi\"e

\end{document}